\theoremstyle{plain}
\numberwithin{equation}{section}
\newcommand{\leaveout}[1]{}
\renewcommand{\p@enumii}{}
\newcommand\C{{\mathbb C}}
\newcommand\D{{\mathbb D}}
\newcommand\Z{{\mathbb Z}}
\newcommand\zplus{{\Z^{+}}}
\newcommand\T{{\mathbb T}}
\newcommand{\dom}[1]{\mathrm{dom}\left(#1\right)}
\newcommand{\bbm}[1]{\begin{bmatrix}#1\end{bmatrix}}
\newcommand{\BC}{{\mathbb C}}\newcommand{\BD}{{\mathbb D}}
\newcommand{\BT}{{\mathbb T}}
\newcommand{\BZ}{{\mathbb Z}}
\newcommand{\cB}{{\mathcal B}}
\newcommand{\cC}{{\mathcal C}}
\newcommand{\cO}{{\mathcal O}}
\newcommand{\cU}{{\mathcal U}}\newcommand{\cV}{{\mathcal V}}
\newcommand{\cW}{{\mathcal W}}\newcommand{\cX}{{\mathcal X}}
\newcommand{\cY}{{\mathcal Y}}
\newcommand{\wtilB}{\widetilde{B}}
\newcommand{\wtilC}{\widetilde{C}}
\newcommand{\wtilQ}{\widetilde{Q}}\newcommand{\wtilR}{\widetilde{R}}
\newcommand{\wtilU}{\widetilde{U}}
\newcommand{\whatQ}{\widehat{Q}}\newcommand{\whatR}{\widehat{R}}
\newcommand{\al}{\alpha}
\newcommand{\be}{\beta}
\newcommand{\ga}{\gamma}\newcommand{\Ga}{\Gamma}
\newcommand{\de}{\delta}
\newcommand{\la}{\lambda}
\newcommand{\si}{\sigma}\newcommand{\Si}{\Sigma}
\newcommand{\im}{\operatorname{Im}}
\newcommand{\kr}{\operatorname{Ker}}
\newcommand{\mat}[1]{\begin{bmatrix} #1 \end{bmatrix}}
\newcommand{\sbm}[1]{\left[\begin{smallmatrix} #1\end{smallmatrix}\right]}
\newcommand{\ov}[1]{{\overline{#1}}}
\newcommand{\tu}[1]{\textup{#1}}
\newcommand{\wtil}[1]{{\widetilde{#1}}}
\newtheorem{lemma}{Lemma}[section]
\newtheorem{theorem}[lemma]{Theorem}
\newtheorem{corollary}[lemma]{Corollary}
\newtheorem{proposition}[lemma]{Proposition}
\theoremstyle{definition}
\newtheorem{example}[lemma]{Example}
\begin{document}

\title[Canonical Wiener-Hopf factorization on the unit circle]{Canonical Wiener-Hopf factorization on the unit circle: matching subspaces versus Riccati equations}

\author[S. ter Horst]{S. ter Horst}
\address{S. ter Horst, Department of Mathematics, Research Focus Area:\ Pure and Applied Analytics, North-West University, Potchefstroom, 2531 South Africa and DSI-NRF Centre of Excellence in Mathematical and Statistical Sciences (CoE-MaSS)}
\email{Sanne.TerHorst@nwu.ac.za}

\author[M. Kurula]{M. Kurula}
\address{M. Kurula, {\AA}bo Akademi Mathematics, Henriksgatan 2, 20500 {\AA}bo, Finland}
\email{Mikael.Kurula@abo.fi}

\author[A.C.M. Ran]{A.C.M. Ran}
\address{A.C.M. Ran, Department of Mathematics, Faculty of Science, VU Amsterdam, De Boelelaan 1111, 1081 HV Amsterdam, The Netherlands and Research Focus: Pure and Applied Analytics, North-West University, Potchefstroom, South Africa}
\email{a.c.m.ran@vu.nl}

\thanks{This work is based on the research supported in part by the National Research Foundation of South Africa (Grant Numbers 127364 and 145688) and the Magnus Ehrnrooth Foundation.}

\subjclass[2020]{Primary 47A63; Secondary 47A48, 47A56, 93B28, 93C05}

\keywords{Wiener-Hopf factorization, operator functions, invariant subspaces, Riccati equations}

\begin{abstract}
Wiener-Hopf factorization is an important tool in the study of block Toeplitz and block Wiener-Hopf operators, and many applications involving these operators. In this paper we compare two approaches to Wiener-Hopf factorization, namely, the more classical approach based on matching invariant subspaces and a more recent approach based on solutions to a non-symmetric Riccati equation. The latter approach is extended to the case of Hilbert space operator-valued functions that are analytic on a neighborhood of the unit disc $\BT$, but need not be rational. In both approaches, existence of canonical right Wiener-Hopf factorization is characterized by existence of a stabilizing solution to a Riccati equation, however, the Riccati equations are not the same. We analyse the solution sets of the Riccati equations and show that they indeed are not the same, but they do have the same stabilizing solution. 
\end{abstract}

\maketitle



\section{Introduction}\label{S:Intro}

Let $R(z)$ be a function whose values are bounded linear operators on a given Hilbert space, which is analytic and invertible on an annulus containing the unit circle. A factorization $R(z)=V_-(z)V_+(z)$ is called a \emph{right canonical Wiener-Hopf factorization} when $V_-(z)$ and $V_+(z)$ are also operator-valued functions, on the same Hilbert space, such that $V_+(z)$ is analytic and invertible on the closed unit disc $\ov{\BD}$ with an inverse that is also analytic on $\ov{\BD}$, while  $V_-(z)$ is analytic and invertible on the exterior of the open disc $\BC\backslash \BD$ with an inverse that is also analytic on $\BC\backslash \BD$. When the properties of the factors are reversed, so $R(z)=W_+(z)W_-(z)$, with $W_+(z)$ and $W_-(z)$ corresponding to the interior and the exterior, respectively, of the unit circle, we say that this is a \emph{left canonical Wiener-Hopf factorization}. 

Canonical Wiener-Hopf factorization of matrix- and operator-valued functions has been studied extensively in the last seventy years. It appeared for instance in \cite{GoKr} in connection to applications to systems of Wiener-Hopf integral equations, and in \cite{GF74} in connection to applications to (block) Toeplitz operators. These sources focussed on the matrix-valued case. The technically much more difficult case of operator-valued functions was treated in \cite{GL09}.
For an extensive overview see \cite{GKS}. 

Having some form of a state space realization for the function $R(z)$ turns out to be extremely helpful in determining the Wiener-Hopf factorization. In many sources it is assumed that $R(z)$ has a realization as the transfer function of a continuous time system, with the additional assumption that the value at infinity is an invertible operator (see, e.g., \cite{BGKR08, BGKR10} and the references give there). The latter assumption is weakened in \cite{Coh83}, while in \cite{GK88} $R(z)$ is assumed to be the transfer function of a continuous time differential-algebraic system (i.e., the state equation is a system of differential-algebraic equations). A related type of realization was used in \cite{Gr}.
A rather different representation of $R(z)$, more adapted to Toeplitz operators, was used in \cite{FKR10}; see \eqref{FKRreal} below. While earlier realization formulas for $R(z)$ lead to conditions for canonical Wiener-Hopf factorization in terms of a matching pair of invariant subspaces for two operators, the main result in \cite{FKR10} expresses existence of canonical Wiener-Hopf factorization in terms of existence of a stabilizing solution of a (discrete) algebraic Riccati equation. In this approach the function does not need to be invertible at zero or at infinity, in fact, it does not even need to be defined at zero or at infinity. The disadvantage of the representation of $R(z)$ in \eqref{FKRreal} is that an immediate formula for the inverse of $R(z)$ is lacking. Algebraic Riccati equations were used in earlier sources as well, mainly to discuss factorizations of functions with symmetry, see e.g., \cite{LaRoBook}. In \cite{FKR10} the non-symmetric case is the focus of attention. Recently, in \cite{tHKR25}, we used an approach based on dichotomous systems (see \cite{BGtH18c}) to return to the factorization of operator-valued functions whose values on the unit circle are of the form identity plus a strict contraction. It is known that such functions admit both right and left canonical Wiener-Hopf factorization (see, e.g., \cite{GL09,BGKR10} and the references given there). In \cite{tHKR25}, we provided explicit formulas for the factors in terms of the operators appearing in a realization of the transfer function of a dichotomous system.

The goal of the present paper is twofold. We expand the results of \cite{FKR10} to the infinite dimensional situation in Theorem \ref{thm:FKR10} below, and we shall discuss the connections between the approaches using a matching pair of invariant subspaces as in \cite{BGKR08, BGKR10} and the approach using a non-symmetric algebraic Riccati equation from \cite{FKR10}. For the latter we shall restrict to operator-valued transfer functions of dichotomous systems. 

In Subsection \ref{SubS:StableRep} we expand the result of \cite{FKR10} to the infinite dimensional situation, while the Wiener-Hopf factorization for dichotomous realizations in terms of invariant subspaces is discussed in Subsection \ref{SubS:DichotReal}. The connection between dichotomous realizations and stable representations from \cite{FKR10} is clarified in Subsection \ref{SubS:Connect}. The two approaches described in Subsections \ref{SubS:StableRep} and \ref{SubS:DichotReal} lead to two algebraic Riccati equations, which are strongly related, but may have different sets of solutions. The connection between these two algebraic Riccati equations is analysed in Section \ref{S:Ricc}. Finally, in Section \ref{S:LeftRight} we assume a right Wiener-Hopf factorization exists, and then give a necessary and sufficient condition for existence of a left Wiener-Hopf factorization (compare \cite{BallRan, BGKR10}). 

The following notation and terminology will be used throughout the paper. For Hilbert spaces $\cX$ and $\cY$, $\cB(\cX,\cY)$ denotes the Banach space of bounded linear operators mapping $\cX$ into $\cY$, abbreviated to $\cB(\cX)$ in case $\cX=\cY$. For a subspace $\cV$ of a Hilbert space $\cX$ we write $P_\cV$ for the orthogonal projection onto $\cV$. In the sequel, the term ``operator'' will always mean bounded linear operator, while ``invertible'' means boundedly invertible.  We use $\si(T)$ and $\rho(T)$ for the spectrum and resolvent set of $T$, respectively; also we define
$
	\rho(T)^{-1}:=\left\{\tfrac{1}{\la}\mid 0\neq\la\in\rho(T)\right\}.
$

\section{Canonical Wiener-Hopf factorization:\ Two approaches}\label{S:WH2approaches}

In this section we discuss two state space approaches to canonical Wiener-Hopf factorization for operator-valued functions, namely the one from \cite{FKR10} based on non-symmetric algebraic Riccati equations (which we extend here to the non-rational Hilbert space operator-valued case) and the one from Chapters 7 and 12 of \cite{BGKR10} based on matching invariant subspaces. Since the method from \cite{FKR10} works for a more general class of functions, we shall discuss that approach first.

\subsection{Canonical Wiener-Hopf factorization in terms of stable representations}\label{SubS:StableRep}

We consider an operator-valued function $R(z)$ with values in $\cB(\cU)$, for a complex Hilbert space $\cU$, which is analytic in a neighborhood of the unit circle $\BT$. Then $R(z)$ has a representation of the form
\begin{equation}\label{FKRreal}
R(z)=\de+z\ga_+(I_{\cX_+}-z\al_+)^{-1}\be_+ + \gamma_-(zI_{\cX_-}-\alpha_-)^ {-1}\beta_-,
\end{equation}
where $\cX_+$ and $\cX_-$ are also Hilbert spaces, $\de$, $\ga_+$, $\al_+$, $\be_+$, $\gamma_-$, $\alpha_-$, $\beta_-$ are operators acting between appropriate Hilbert spaces, and $\al_+$ and $\alpha_-$ are both exponentially stable, i.e., $\sigma(\al_+)$ and $\sigma(\alpha_-)$ are both contained in the open unit disk $\BD$: That $R(z)$ admits a representation of the form \eqref{FKRreal} follows since $R(z)$ is a $\cB(\cU)$-valued Wiener function on $\BT$, which is analytic on a neighborhood of $\BT$, and hence in $L^2(\T;\cB(\cU))$, so that for some sequence $u_k\in \ell^2(\cU)$,
\begin{equation}\label{eq:Rfourier}
	R(z)=\sum_{j=-\infty}^\infty z^jR_j.
\end{equation}
This function can be written as the sum of a $\cB(\cU)$-valued function that is analytic on a neighborhood of $\overline{\BD}$ (take terms with $j\geq0$) and an $\cB(\cU)$-valued function that is analytic on a neighborhood of $\BC\backslash \BD$ and zero at $\infty$ (for $j<0$). Then \eqref{FKRreal} follows by applying the theory of Section 4.2 in \cite{BGKR08} to both parts. A representation of the form \eqref{FKRreal} with all operators bounded and $\alpha_\pm$ exponentially stable is called a {\em stable representation} of $R(z)$. Such representations were already considered in \cite{BCR88,BR07}, and here we present the results on Wiener-Hopf factorization for rational matrix functions from \cite{FKR10} in an infinite dimensional setting. The main result of \cite{FKR10} is the following (there the result is for the less delicate setting with finite dimensions):

\begin{theorem}\label{thm:FKR10}
For a stable representation $R(z)$, which is invertible and analytic in an open neighborhood of the unit circle $\T$, the following claims are true:

The function $R(z)$ admits a right canonical Wiener-Hopf factorization if and only if there exists a $Q\in\cB(\cX_+,\cX_-)$, such that $\de-\gamma_- Q\be_+$ is invertible, which satisfies the algebraic Riccati equation
\begin{equation}\label{RiccFKR}
Q=\alpha_- Q\al_++(\beta_--\alpha_- Q\be_+)(\de-\gamma_- Q\be_+)^{-1}(\ga_+-\gamma_-Q\al_+),	
\end{equation}
and is \emph{stabilizing} in the sense that it makes the operators
\begin{equation}\label{FKRalcirc}
\begin{aligned}
\al_+^\circ&:=\al_+-\be_+(\de-\gamma_- Q\be_+)^{-1}(\ga_+-\gamma_- Q\al_+)\\
\alpha_-^\circ&:=\alpha_--(\beta_--\alpha_- Q\be_+)(\de-\gamma_- Q\be_+)^{-1}\gamma_-
\end{aligned}
\end{equation}
exponentially stable. In fact, there exists at most one stabilizing solution  $Q$ to \eqref{RiccFKR}. 

Once a stabilizing solution $Q$ to \eqref{RiccFKR} is obtained, a right canonical Wiener-Hopf factorization $R(z)=V_-(z)V_+(z)$ is given by
\begin{equation}\label{PsiThetaFKR}
V_-(z):=\de_- +\gamma_-(z I_{\cX_-}-\alpha_-)^{-1}\beta_-^\circ,\quad
V_+(z):=\de_+  + z\ga_+^\circ(I_{\cX_+}-z\al_+)^{-1}\be_+,
\end{equation}
where $\de_+,\delta_-\in\cB(\cU)$ are invertible operators such that $\delta_- \de_+=\de-\gamma_- Q\be_+$ and 
\begin{equation}\label{FKRgabecirc}
\ga_+^\circ:=\delta_-^{-1}(\ga_+-\gamma_- Q \al_+),\quad \beta_-^\circ:=(\beta_--\alpha_- Q\be_+)\de_+^{-1}.
\end{equation}
In that case, the inverses of $V_+(z)$ and $V_-(z)$ are given explicitly by
\begin{equation}\label{ThetaPsiInv}
\begin{aligned}
V_+(z)^{-1}&=\de_+^{-1}-z \de_+^{-1}\ga_+^\circ(I_{\cX_+}-z\al_+^\circ)^{-1}\be_+\de_+^{-1},\\
V_-(z)^{-1}&=\delta_-^{-1}-\delta_-^{-1}\gamma_-(z I_{\cX_-}-\alpha_-^\circ)^{-1}\beta_-^\circ \delta_-^{-1}.
\end{aligned}
\end{equation}
\end{theorem}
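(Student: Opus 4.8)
The plan is to treat the three assertions of the theorem separately: the explicit factorization formulas (sufficiency together with the inverse formulas), the converse implication (necessity), and uniqueness of the stabilizing solution. The sufficiency part is constructive and essentially algebraic, and I expect it to go through in the Hilbert-space setting with no change from the finite-dimensional argument of \cite{FKR10}; the converse is where the infinite-dimensional character forces a genuinely operator-theoretic argument, and that is the step I expect to be the main obstacle.

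For sufficiency I would start from a stabilizing solution $Q$, fix any factorization $\delta_-\de_+=\de-\gamma_- Q\be_+$ into invertible factors, and define $\ga_+^\circ,\beta_-^\circ$ by \eqref{FKRgabecirc} and the factors $V_\pm$ by \eqref{PsiThetaFKR}. Analyticity of $V_+$ on $\overline{\BD}$ and of $V_-$ on $\BC\setminus\BD$ (including $\infty$) is immediate from $\sigma(\al_+)\subset\BD$ and $\sigma(\alpha_-)\subset\BD$. The inverse formulas \eqref{ThetaPsiInv} are the standard state-space inversion: for $V_+(z)=\de_++z\ga_+^\circ(I-z\al_+)^{-1}\be_+$ the associate state operator is $\al_+-\be_+\de_+^{-1}\ga_+^\circ$, which equals $\al_+^\circ$ by \eqref{FKRalcirc} and the relation $(\de-\gamma_- Q\be_+)^{-1}=\de_+^{-1}\delta_-^{-1}$; analyticity of $V_+^{-1}$ on $\overline{\BD}$ (resp.\ of $V_-^{-1}$ on the exterior) is exactly $\sigma(\al_+^\circ)\subset\BD$ (resp.\ $\sigma(\alpha_-^\circ)\subset\BD$), i.e.\ the stabilizing hypothesis. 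The one computation with real content is $V_-(z)V_+(z)=R(z)$. Here I would observe that the Riccati equation \eqref{RiccFKR} is equivalent to the Stein identity
\[
Q-\alpha_- Q\al_+=\beta_-^\circ\ga_+^\circ,
\]
multiply out the four terms of $V_-V_+$, rewrite the mixed term $\gamma_-(zI-\alpha_-)^{-1}\beta_-^\circ\,z\ga_+^\circ(I-z\al_+)^{-1}\be_+$ using this identity together with the resolvent identities $z(zI-\alpha_-)^{-1}=I+\alpha_-(zI-\alpha_-)^{-1}$ and $z\al_+(I-z\al_+)^{-1}=(I-z\al_+)^{-1}-I$, and check that the spurious $(I-z\al_+)^{-1}\be_+$– and $(zI-\alpha_-)^{-1}$–terms cancel in pairs, leaving exactly $R(z)$.

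For necessity, suppose $R=V_-V_+$ is a right canonical factorization. The strategy is to produce $Q$ by realizing the factors rather than by the eigenvalue and invariant-subspace counting available in finite dimensions. Applying the representation result behind \eqref{FKRreal} to the factors and their inverses, I would realize $V_+$ and $V_+^{-1}$ as stable representations of pure ``$+$ type'' with exponentially stable state operators, and $V_-$, $V_-^{-1}$ as pure ``$-$ type''; the key structural point is that in the inverse formulas the state operators of $V_+^{-1}$ and $V_-^{-1}$ are precisely the candidates for $\al_+^\circ$ and $\alpha_-^\circ$, so the requirement that the inverses be analytic on $\overline{\BD}$ and on the exterior is exactly the stabilizing property. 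It then remains to match these representations against the given one for $R$: writing $V_+=V_-^{-1}R$ and $V_-=RV_+^{-1}$ and demanding that $V_+$ carry no $-$ part and $V_-$ no $+$ part produces a Sylvester/Stein equation whose solution is the coupling operator $Q\colon\cX_+\to\cX_-$, with $\de-\gamma_- Q\be_+$ inheriting invertibility from the invertible constant terms of the factors.

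The main obstacle is precisely this matching step. In infinite dimensions the given stable representation of $R$ need not be minimal, and the representations obtained from the factors need not be similar to it in any naive sense, so I cannot simply invoke uniqueness of realizations. I would therefore have to argue carefully that the constructed $Q$ is a \emph{bounded} operator, that $\de-\gamma_- Q\be_+$ is \emph{boundedly} invertible, and that the relevant intertwining preserves the strict spectral-radius bounds encoding exponential stability; controlling these three points, rather than the algebra, is the crux of the proof. Finally, for uniqueness of the stabilizing solution I would take two stabilizing solutions $Q_1,Q_2$, form $\Delta=Q_1-Q_2$, and show by a direct computation that $\Delta$ satisfies a homogeneous Stein equation of the form $\Delta=\alpha_-^\circ\,\Delta\,\al_+^\circ$ whose coefficients are the (exponentially stable) closed-loop operators of the two solutions; iterating gives $\Delta=(\alpha_-^\circ)^n\Delta(\al_+^\circ)^n\to0$, so $\Delta=0$.
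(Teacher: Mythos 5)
Your sufficiency argument and your uniqueness argument are both sound and are essentially the paper's: the paper likewise obtains \eqref{ThetaPsiInv} by state-space inversion (noting $\al_+-\be_+\de_+^{-1}\ga_+^\circ=\al_+^\circ$ since $\de_+^{-1}\delta_-^{-1}=(\de-\gamma_-Q\be_+)^{-1}$), checks $R=V_-V_+$ by the algebraic computation of \cite{FKR10}, and settles uniqueness by the homogeneous Stein-equation iteration you describe (Part 4 of the proof of Theorem 1.1 in \cite{FKR10}).

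The genuine gap is in the necessity direction, and it sits exactly where you say the crux is: you never construct $Q$. Your plan --- realize the factors, write $V_+=V_-^{-1}R$ and $V_-=RV_+^{-1}$, and ``demand'' that matching the $+$ and $-$ parts produce a Sylvester/Stein equation for a coupling operator --- is precisely the step that does not survive in infinite dimensions, since (as you yourself note) the given stable representation need not be minimal and there is no state-space isomorphism theorem forcing the realizations of the factors to be related to it; flagging boundedness of $Q$, invertibility of $\de-\gamma_-Q\be_+$, and the spectral-radius bounds as ``the crux'' does not supply them. The paper avoids realization matching altogether by routing through the block Toeplitz operator: a right canonical factorization gives $T_R^{-1}=T_{V_+^{-1}}T_{V_-^{-1}}$ purely algebraically (triangularity of the Toeplitz operators of the factors), and one then \emph{defines}
\[
Q:=\cC\, T_R^{-1}\cO,\qquad
\cC=\bbm{\be_- & \al_-\be_- & \al_-^2\be_- & \cdots},\qquad
\cO=\bbm{\ga_+ \\ \ga_+\al_+ \\ \ga_+\al_+^2 \\ \vdots},
\]
which is bounded by construction because $\al_\pm$ are exponentially stable; the Riccati equation and the invertibility of $\de-\gamma_-Q\be_+$ then follow from the algebra of \cite{FKR10}. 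The stabilizing property --- the part with real infinite-dimensional content --- is proved in the paper by three devices for which your sketch has no counterpart: (a) the intertwining $S^*T_R^{-1}\cO=T_R^{-1}\cO\,\al_+^\circ$ together with an approximate-point-spectrum argument gives $\si(\al_+^\circ)\subset\ov{\BD}$; (b) a Schur-coupling argument relating $I_{\cX_+}-z\al_+^\circ$ to $V_+(z)$, combined with the analytic continuation of $V_+(z)^{-1}=R(z)^{-1}V_-(z)$ across $\T$, excludes spectrum on the unit circle; and (c) the symmetry $R^\sharp(z)=R(1/\ov{z})^*$, which swaps the $+$ and $-$ data and transfers the conclusion to $\al_-^\circ$. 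Without an argument of comparable substance filling your matching step, your proposal does not prove the ``only if'' half of the theorem.
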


\begin{proof}[\bf Proof]
Most of the arguments in the proof of Theorem 1.1 from \cite{FKR10}, which establishes the above in the rational matrix function case, are algebraic in nature and it is easy to see they carry over to the infinite dimensional case presented here. Rather than going through all the arguments again, we her focus on the few claims where additional work is needed. Where nothing else is mentioned in the remainder of this proof, do as in the proof of \cite[Thm 1.1]{FKR10}.

The proof of the above claims follows by proving equivalence of the following three statements:\footnote{The equivalence of (ii) and (iii) is well known; in the infinite dimensional case considered here, the result follows from Theorem 8.6.7 in \cite{GL09}; the ``local factorizations relative to $\BT$'' assumption (see Definition 7.1.3 in \cite{GL09}) is satisfied since we assume $R(z)$ to be analytic on a neighborhood of the unit circle $\BT$.}
\begin{itemize}
  \item[(i)] There exists a stabilizing solution $Q$ to the Riccati equation \eqref{RiccFKR}.
      
  \item[(ii)] The function $R(z)$ admits a right canonical Wiener-Hopf factorization.
  
  \item[(iii)] The bounded Toeplitz operator 
 $$
 	T_R:=\bbm{ R_0&R_{-1}&R_{-2}&\cdots \\
 	R_1&R_0&R_{-1}&\cdots \\
 	R_2&R_1&R_0&\cdots \\
 	\vdots&\vdots&\vdots&\ddots },
$$
cf. \eqref{eq:Rfourier}, on $\ell^2(\zplus;\cU)$ with symbol $R$ has a bounded inverse. Then $T_R^{-1}$ will be used to construct the unique stabilizing solution $Q$ of \eqref{RiccFKR}.
\end{itemize}

The implication (ii) $\Rightarrow$ (iii) follows here because basic theory of block Toeplitz operators and the right canonical Wiener-Hopf factorization yield $T_R=T_{V_-} T_{V_+}$ and 
$$
	T_R^{-1}=T_{V_+}^{-1} T_{V_-}^{-1} =T_{V_+^{-1}} T_{V_-^{-1}},
$$ 
with $T_{V_-}$, $T_{V_+}$, $T_{V_+^{-1}}$, $T_{V_-^{-1}}$ the Toeplitz operators defined by the operator functions $V_-(z)$, $V_+(z)$, $V_+(z)^{-1}$, $V_-(z)^{-1}$, respectively; these are purely algebraic statements, which verify because of the triangularity of the Toeplitz operators.

Assume that (i) holds; we will establish (ii). The formulas \eqref{ThetaPsiInv} for $V_-(z)^{-1}$ and $V_+(z)^{-1}$ can be verified by multiplying from the left and the right with $V_-(z)$ and $V_+(z)$ in \eqref{PsiThetaFKR}. (The formulas for the inverses also follow from the standard inversion formulas in \cite[Section 2.4]{BGKR10} .) Since $\alpha^\circ_+, \alpha^\circ_-$ are both exponentially stable by assumption, it is clear that the functions $V_-(z)$ and $V_+(z)$ and their inverses are of the right type. It remains to see that $R(z)=V_-(z)V_+(z)$ on $\BT$. This follows from the computation in Part 3 of the proof of \cite[Thm 1.1]{FKR10}.

The proof that (iii) implies (i) in the infinite-dimensional case requires some modification of the proof in \cite{FKR10}: Assume that (iii) holds. As in \cite{FKR10}, since $\al_+$ and $\al_-$ are exponentially stable, we obtain well defined bounded operators by setting
\[
\cC:=\mat{\be_- & \al_- \be_- & \al_-^2\be_- \cdots}:\ell^2(\cU)\to\cX_-,\quad 
\cO:=\mat{\ga_+\\ \ga_+ \al_+\\ \ga_+ \al_+^2\\ \vdots}:\cX_+ \to \ell^2(\cU). 
\] 
Then $Q:=\cC T_R^{-1} \cO$ is a bounded operator mapping $\cX_+$ into $\cX_-$, such that $Q-\gamma_-Q\beta_+$ is invertible and $Q$ satisfies \eqref{RiccFKR} by the computations in Part 1 of the proof of \cite[Theorem 1.1]{FKR10} and the invertibility of $T$ in \cite[eq.\ (2.1)]{FKR10}. We next show that $Q$ is a stabilizing solution, i.e., that $\al_+^\circ$ and $\al_-^\circ$ are exponentially stable.

First we show that $\sigma(\al_+^\circ)\subset \overline{\mathbb{D}}$. Suppose, to the contrary, that $\lambda$ is in the boundary $\partial \sigma(\al_+^\circ)$ of the spectrum $\sigma(\al_+^\circ)$ and $|\lambda| >1$. Then $\lambda$ is in the approximate point spectrum of $\al_+^\circ$, by \cite[Prop.\ VII.6.7]{Conway90}, and so there is a sequence $\{x_n\}_{n=1}^\infty$ such that $\|x_n\|=1$ and $(\al_+^\circ -\lambda I)x_n\to 0$. By the last display on p.\ 220 of \cite{FKR10}, we have $S^*T_R^{-1}\cO =T_R^{-1}\cO\al_+^\circ$, where $S^*$ is the incoming left shift on $\ell^2(\zplus;\cU)$, and then 
$$
	(S^*-\lambda I)T_R^{-1}\cO x_n=T_R^{-1}\cO(\al_+^\circ -\lambda I)x_n \to 0.
$$
As $|\lambda| >1$ it follows that $S^*-\lambda I$ is invertible, and so $\cO x_n\to 0$. In particular, $\ga_+ x_n\to 0$, and $\cO\al_+x_n\to 0$. From the latter it follows that 
$$
	Q\al_+x_n=\cC T_R^{-1}\cO\al_+x_n\to 0.
$$
From these we obtain
\[
(\al_+ -\lambda I)x_n=(\al_+^\circ -\lambda I)x_n +\be_+(\de -\ga_-Q\be_+)^{-1}(\ga_+ -\ga_- Q\al_+)x_n \to 0.
\]
Thus, $\lambda$ is in the approximate point spectrum of $\al_+$. But $\al_+$ is exponentially stable, and this means that $|\lambda | <1$. This contradicts the assumption that $|\lambda| >1$. Hence we see that the spectrum of $\al_+^\circ$ is in the closed unit disc. 

The next step is to show that $\sigma(\alpha_+^\circ)\cap\T=\emptyset$, and we do this via establishing the invertibility of $V_+$ on an open disc which contains $\overline\D$. Since $\si(\al_+^\circ)\subset \ov{\BD}$, it follows that the first formula in \eqref{ThetaPsiInv} is well-defined for $z$ inside the open unit disc and gives the inverse of $V_+(z)$ there. Now note that by the standing assumptions on $R$ and the exponential stability of $\alpha_\pm$, we can find an annulus $r_1< |z| < r_2$ with $r_1<1$ and $r_2>1$, where $R(z)$, $R(z)^{-1}$, $V_+(z)$ and $V_-(z)$ are all analytic. The factorization $R(z)=V_-(z)V_+(z)$ is verified in Part 3 of the proof of \cite[Thm 1.1]{FKR10}, and we then have for $r_1<|z|<1$ that $V_+(z)^{-1}= R(z)^{-1}V_-(z)$, so not only $I=(R(z)^{-1}V_-(z))V_+(z)$ but also $V_+(z)(R(z)^{-1}V_-(z))=I$. However, both of these equalities extend analytically to the annulus $r_1<|z|<r_2$. Thus we have that $V_+(z)$ is invertible in the open disc $|z|<r_2$, which clearly contains $\overline\D$. 

Now we are ready to prove that $\al_+^\circ$ is exponentially stable, by showing that $\si(\al_+^\circ)\subset \BD$. Consider the $2 \times 2$ operator block polynomial 
\[
M(z):=\mat{I_{\cX_+}-z\al_+ & -\be_+\\ z\ga_+^\circ & \de_+}. 
\]
By construction, $\de_+$ is invertible, and since $\al_+$ is exponentially stable, $I_{\cX_+}-z\al_+$ is invertible for all $z$ in an open neighborhood of $\overline{\BD}$. The Schur complement of $M(z)$ with respect to $\de_+$ is 
\[
M(z)\backslash \de_+= I_{\cX_+}-z\al_+ + z\be_+ \de_+^{-1}\ga_+^\circ = I_{\cX_+}-z (\al_+ -\be_+\de_+^{-1} \de_-^{-1} \de_- \ga_+^\circ)=I_{\cX_+}-z \al_+^\circ.
\]
For $z\in \overline{\BD}$, the Schur complement of $M(z)$ with respect to $I_{\cX_+}-z\al_+$ is    
\[
M(z)\backslash(I_{\cX_+}-z\al_+)= \de_+ + z \ga_+^\circ(I_{\cX_+}-z\al_+)^{-1}\be_+ =V_+(z). 
\]
Hence for $z\in \overline{\BD}$, $V_+(z)$ and $I_{\cX_+}-z \al_+^\circ$ are Schur coupled (cf., \cite{BT94}), and as a consequence  $I_{\cX_+}-z \al_+^\circ$ is invertible if and only if $V_+(z)$ is invertible. However, as established above, $V_+(z)$ has invertible values even on a closed neighborhood of $\overline{\BD}$, which proves that $\overline\D\subset\rho(\alpha_+^\circ)^{-1}$, and hence that $\al_+^\circ$ is exponentially stable. (For this argument, compare also \cite{BGKR10}, Theorem 5.7 and its proof.

In order to obtain that $\alpha_-^\circ$ is also exponentially stable, define $R^\sharp(z):=R(1/\overline z)^*$, $1/\overline z\in\dom R$. The reader may next verify that from a stable representation \eqref{FKRreal} of $R$, one gets a stable representation
$$
	R^\sharp(z)=\de^*+\be_+^*(zI_{\cX_+}-\al_+^*)^{-1}\ga_+^* + z\be_-^*(I_{\cX_-}-z\alpha_-^*)^ {-1}\ga_-^*,
$$
and $T_{R^\sharp}=T_R^*$ is then bounded on $\ell^2(\zplus;\cU)$ with bounded inverse. The above argument then shows that $(\alpha_+^\circ)^\sharp$ is exponentially stable, and now it only remains to note that \eqref{FKRalcirc} yields
$$
	(\alpha_+^\circ)^\sharp = \al_-^*-\ga_-^*(\delta^*-\be_+^*Q^\sharp\ga_-^*)^{-1}(\be_-^*-\be_+^*Q^\sharp\al_-^*)=(\al_-^\circ)^*,
$$
since $Q^\sharp=Q^*$.

Finally, uniqueness of the stabilizing solution $Q$ of \eqref{RiccFKR} follows by the same argument as in Part 4 of the proof of Theorem 1.1 in \cite{FKR10}.
\end{proof}

From the procedure to determine a right canonical Wiener-Hopf factorization for $R(z)$ it is easy to derive a procedure for a canonical left Wiener-Hopf factorization for $R(z)$, namely by applying the right canonical Wiener-Hopf factorization procedure to $\wtilR(z):=R(\frac{1}{z})$, which is again analytic with an analytic inverse in some annulus containing $\T$. The result of this procedure is as follows:

\begin{corollary}
The function $R(z)$ admits a left canonical Wiener-Hopf factorization if and only if the algebraic Riccati equation\begin{equation}\label{RiCCFKR-L}
\wtilQ=\al_+ \wtilQ\alpha_- +(\be_+- \al_+\wtilQ \beta_-)(\de-\ga_+ \wtilQ \beta_-)^{-1}(\gamma_- - \ga_+ \wtilQ \alpha_-)	
\end{equation}
has a solution $\wtilQ\in\cB(\cX_-,\cX_+)$ which is {\em stabilizing} in the sense that the operators
\begin{equation}\label{FKRalcirc-L}
\begin{aligned}
\wtil{\al}_-^\circ&:=\al_- - \be_- (\de-\ga_+ \wtilQ \beta_-)^{-1}(\gamma_- - \ga_+ \wtilQ \alpha_-)\\
\wtil{\alpha}_+^\circ&:=\al_+-(\be_+- \al_+\wtilQ \beta_-)(\de-\ga_+ \wtilQ \beta_-)^{-1}\ga_+
\end{aligned}
\end{equation}
are exponentially stable. There exists at most one stabilizing solution $\wtilQ$ to \eqref{RiCCFKR-L}. 

With a stabilizing solution $\wtilQ$ to \eqref{RiCCFKR-L}, a left canonical Wiener-Hopf factorization $R(z)=W_+(z)W_-(z)$ for $R(z)$ is obtained by taking
\begin{equation}\label{XiPhiFKR-L}
\begin{aligned}
W_+(z)&:=\wtil{\de}_+ + z\ga_+ (I_{\cX_+}-z\al_+)^{-1} \wtil{\beta}_+^\circ,\\
W_-(z)&:=\wtil{\delta}_-+\wtil{\ga}_-^\circ (zI_{\cX_-}-\al_-)^{-1}\be_-,
\end{aligned}
\end{equation}
where $\wtil{\de}_+,\wtil{\delta}_-\in\cB(\cU)$ are invertible operators such that $\wtil{\de}_+\wtil{\delta}_-=\de-\ga_+ \wtilQ \beta_-$ and
\begin{equation}\label{FKRgabecirc-L}
\wtil{\ga}_-^\circ:=\wtil{\de}_+^{-1}(\gamma_--\ga_+ \wtilQ \alpha_-),\quad \wtil{\beta}_+^\circ:=(\be_+-\al_+ \wtilQ \beta_-)\wtil{\delta}_-^{-1}
\end{equation}
The inverses of $W_+(z)$ and $W_-(z)$ are then given explicitly by
\begin{equation}\label{XiPhiInv-L}
\begin{aligned}
W_+(z)^{-1}&=\wtil{\de}_+^{-1}-z\wtil{\de}_+^{-1}\ga_+ (I_{\cX_+}-z\wtil{\al}_+^\circ)^{-1} \wtil{\beta}_+^\circ \wtil{\de}_+^{-1},\\
W_-(z)^{-1}&=\wtil{\delta}_-^{-1}- \wtil{\delta}_-^{-1}\wtil{\ga}_-^\circ(zI_{\cX_-}-\wtil{\alpha}_+^\circ)^{-1}\beta_-\wtil{\delta}_-^{-1}.
\end{aligned}
\end{equation}  
\end{corollary}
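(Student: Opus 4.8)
The plan is to derive the whole corollary from Theorem~\ref{thm:FKR10} applied to the reflected function $\wtilR(z):=R(1/z)$, as announced just before the statement. First I would record a stable representation of $\wtilR$. Substituting $z\mapsto 1/z$ in \eqref{FKRreal} and using $(I_{\cX_+}-\tfrac1z\al_+)^{-1}=z(zI_{\cX_+}-\al_+)^{-1}$ together with $(\tfrac1z I_{\cX_-}-\alpha_-)^{-1}=z(I_{\cX_-}-z\alpha_-)^{-1}$ gives
\[
\wtilR(z)=\de+z\gamma_-(I_{\cX_-}-z\alpha_-)^{-1}\beta_-+\ga_+(zI_{\cX_+}-\al_+)^{-1}\be_+,
\]
which is again of the form \eqref{FKRreal}, now with the $+$-data of $\wtilR$ equal to the $-$-data $(\gamma_-,\alpha_-,\beta_-)$ of $R$ and the $-$-data of $\wtilR$ equal to the $+$-data $(\ga_+,\al_+,\be_+)$ of $R$, the feedthrough $\de$ being unchanged. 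As $\al_+$ and $\alpha_-$ are merely interchanged, both remain exponentially stable, so this is a genuine stable representation; the computation is entirely parallel to the one already carried out for $R^\sharp$ in the proof of Theorem~\ref{thm:FKR10}, and $\wtilR$ inherits analyticity and invertibility on an annulus around $\BT$ from $R$.

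With this dictionary, I would substitute the tilded data into \eqref{RiccFKR} and \eqref{FKRalcirc}. The Riccati equation \eqref{RiccFKR} for $\wtilR$ becomes verbatim \eqref{RiCCFKR-L}, and the two operators in \eqref{FKRalcirc} for $\wtilR$ become those in \eqref{FKRalcirc-L}; the only subtlety is a relabeling, since the first formula of \eqref{FKRalcirc} now perturbs $\alpha_-$ and is therefore named $\wtil{\al}_-^\circ$, while the second perturbs $\al_+$ and is named $\wtil{\alpha}_+^\circ$. Theorem~\ref{thm:FKR10} thus shows that $\wtilR$ admits a right canonical Wiener--Hopf factorization if and only if \eqref{RiCCFKR-L} has a (necessarily unique) stabilizing solution $\wtilQ\in\cB(\cX_-,\cX_+)$.

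It remains to transport the factorization through the reflection. If $\wtilR(z)=\wtilV_-(z)\wtilV_+(z)$ is the right canonical factorization produced by Theorem~\ref{thm:FKR10}, then $R(z)=\wtilV_-(1/z)\,\wtilV_+(1/z)$. Since $\wtilV_+$ is analytic and invertible, with analytic inverse, on a neighborhood of $\overline{\BD}$, the function $\wtilV_+(1/z)$ has these properties on $\BC\backslash\BD$ up to and including $\infty$, hence is an exterior factor; dually $\wtilV_-(1/z)$ is an interior factor. Setting $W_+(z):=\wtilV_-(1/z)$ and $W_-(z):=\wtilV_+(1/z)$ therefore yields a \emph{left} canonical factorization $R(z)=W_+(z)W_-(z)$. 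Conversely, any left factorization of $R$ reflects to a right factorization of $\wtilR$, so the reflection is a bijection between the two; this gives the ``only if'' direction and uniqueness. Applying the same substitution $z\mapsto 1/z$ and the identity $(I_{\cX_+}-\tfrac1z\al_+)^{-1}=z(zI_{\cX_+}-\al_+)^{-1}$ to \eqref{PsiThetaFKR}, \eqref{FKRgabecirc} and \eqref{ThetaPsiInv} converts them into \eqref{XiPhiFKR-L}, \eqref{FKRgabecirc-L} and \eqref{XiPhiInv-L}.

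I expect the main obstacle to be bookkeeping rather than conceptual. One must track the interchange of the $+$ and $-$ roles together with the order reversal it forces: the normalization $\delta_-\de_+=\de-\gamma_-Q\be_+$ of Theorem~\ref{thm:FKR10} becomes $\wtil{\de}_+\wtil{\delta}_-=\de-\ga_+\wtilQ\beta_-$ in \eqref{FKRgabecirc-L}, so in each of \eqref{FKRalcirc-L}--\eqref{XiPhiInv-L} one has to check that the relabeled operator genuinely equals the reflected image of its counterpart in \eqref{FKRalcirc}--\eqref{ThetaPsiInv}. None of this requires the analytic spectral arguments of the theorem's proof, since exponential stability of $\wtil{\al}_-^\circ$ and $\wtil{\alpha}_+^\circ$ is inherited directly from Theorem~\ref{thm:FKR10} applied to $\wtilR$.
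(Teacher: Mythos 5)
Your proposal is correct and follows essentially the same route as the paper's own proof: form the stable representation of $\wtilR(z)=R(1/z)$ by swapping the plus and minus data, apply Theorem~\ref{thm:FKR10} to $\wtilR$, and transport the resulting right canonical factorization back via $W_\pm(z):=\wtil{V}_\mp(1/z)$, tracking the relabeled normalization $\wtil{\de}_+\wtil{\delta}_-=\de-\ga_+\wtilQ\beta_-$. The paper's proof is exactly this argument, stated slightly more tersely.
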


\begin{proof}[\bf Proof]
First observe that one gets a stable representation of $\widetilde R$ by swapping all plus and minus signs in a stable representation \eqref{FKRreal} of $R$:
\begin{align*}
\wtilR(z) 
&=\de+\mbox{$\frac{1}{z}$}\ga_+(I_{\cX_+}-\mbox{$\frac{1}{z}$}\al_+)^{-1}\be_+ + \gamma_-(\mbox{$\frac{1}{z}$}I_{\cX_-}-\alpha_-)^ {-1}\beta_-\\
&=\de+ z\gamma_-(I_{\cX_-}-z\alpha_-)^ {-1}\beta_-+\ga_+(zI_{\cX_+}-\al_+)^{-1}\be_+.
\end{align*}
Formulas \eqref{RiCCFKR-L}, \eqref{FKRalcirc-L} and \eqref{FKRgabecirc-L} are obtained using such a sign swap in \eqref{RiccFKR}, \eqref{FKRalcirc} and \eqref{FKRgabecirc}, respectively.

Now, applying Thm \ref{thm:FKR10} to $\widetilde R$, we get the result, observing that $\wtilR(z)=\wtil{V_-}(z)\wtil{V_+}(z)$ is a right canonical Wiener-Hopf factorization for $\wtilR(z)$ if and only if $R(z)=\wtil{V_-}(\frac{1}{z})\wtil{V_+}(\frac{1}{z})$ is a left canonical Wiener-Hopf factorization for $R(z)$. Now simply pick $W_\pm(z):=\wtil{V_\mp}(\mbox{$\frac{1}{z}$})$ to get \eqref{XiPhiFKR-L} and \eqref{XiPhiInv-L} from \eqref{PsiThetaFKR} and \eqref{ThetaPsiInv}.
 \end{proof}

\subsection{Canonical Wiener-Hopf factorization for dichotomous realizations}\label{SubS:DichotReal}

In the second approach to canonical Wiener-Hopf factorization, we assume that the $\cB(\cU)$ valued function $R(z)$ is given by a transfer function formula of the form 
\begin{equation}\label{trans}
R(z)=D+z C(I_\cX-zA)^{-1}B,
\end{equation}
with $\cX$ another Hilbert space and $A,B,C$ and $D$ operators acting between appropriate Hilbert spaces, and here we assume that $\si(A)\cap \BT =\emptyset$. Such functions are analytic in a neighborhood of 0, in addition to a neighborhood of $\BT$. The right-hand side in \eqref{trans} is the transfer function of the discrete-time input-state-output system $\Sigma$ over the nonnegative integers $\BZ_+$ given by 
\begin{equation}\label{DichotSys}
\Si: 
	\left\{
		\begin{aligned}
			x(n+1) &= A x(n)+ Bu(n) \\	
			y(n) &= Cx(n)+Du(n),\quad n\in\zplus,
			\quad x(0)=x_0~\text{given},
		\end{aligned}
	\right.
\end{equation} 
which is called {\em dichotomous} since we assume $\si(A)\cap \BT =\emptyset$; see \cite{BGtH18c} for more on linear discrete-time input-state-output dichotomous systems, and see \cite{tHKR25} for Wiener-Hopf factorization in case $G(z)$ has the form $G(z)=I+K(z)$ with $K(z)$ strictly contractive on $\BT$. The dichotomy assumption is equivalent to the existence of a direct sum decomposition  $\cX=\cX_- \dotplus \cX_+$ of the state space $\cX$, such that with respect to this decomposition, $A$ has the form
\begin{equation}\label{DichotDecompA}
	A=\bbm{A_-&0\\0&A_+}: \cX_-\dotplus\cX_+\to\cX_-\dotplus\cX_+,
\end{equation}
with $A_-$ invertible, and $A_+$ and $A_-^{-1}$ both exponentially stable. The spaces $\cX_+$ and $\cX_-$ are in fact uniquely determined by $A$, and correspond to the spectral subspaces of $A$ with respect to $\BD$ and $\BC \backslash \ov{\BD}$, respectively. Next we decompose $C$ and $B$ with respect to the decomposition $\cX=\cX_-\dotplus\cX_+$ as
\begin{equation}\label{DichotDecompBC}
C=\mat{C_-&C_+}:\cX_-\dotplus\cX_+\to \cU \quad\mbox{and}\quad B=\mat{B_-\\B_+}:\cU\to \cX_-\dotplus\cX_+.
\end{equation}
In order to apply the matching invariant subspace approach, we shall assume that the feedthrough operator $D$ is invertible, and define 
\begin{equation}\label{Across}
A^\times := A-BD^{-1}C.
\end{equation}
In that case $R(z)$ is invertible on the set $\{0\}\cup (\rho(A)^{-1}\cap \rho(A^\times)^{-1})$, with inverse given by 
\[
R(z)^{-1}=D^{-1}-z D^{-1}C (I-z A^\times)^{-1}BD^{-1},\quad z\in \{0\}\cup (\rho(A)^{-1}\cap \rho(A^\times)^{-1}).
\]
When $A^\times$ is also dichotomous, then for $A^\times$ we also have a decomposition $\cX=\cX_+^\times \dotplus \cX_-^\times$, with $\cX_+^\times$ and $\cX_-^\times$ the spectral subspaces of $A^\times$ with respect to $\BD$ and $\BC \backslash \ov{\BD}$, respectively, such that $A^\times$ decomposes as 
\begin{equation}\label{DichotDecompAcross}
A^\times=\mat{A^\times_-&0\\0& A^\times_+}: \cX_-^\times\dotplus\cX_+^\times\to\cX_-^\times\dotplus\cX_+^\times.
\end{equation}

Now assume that $D$ is invertible and factor $D=D_+ D_-$ with $D_\pm\in\cB(\cU)$ invertible. Set 
$$
	\whatR(z):=D_+^{-1}R\left(\frac1z\right)D_-^{-1}=I+D_+^{-1}C(zI_\cX-A)^{-1}BD_-^{-1},
$$
so that a canonical Wiener-Hopf factorization of $R$ can easily be obtained from such a factorization of $\whatR$ using the formula $R(z)=D_+ \whatR(\frac{1}{z}) D_-$. Note, however, that a right canonical factorization of the operator-valued function $\whatR$ translates to a left canonical factorization of $R(z)$ (and vice versa), due to the change of $\frac{1}{z}$ to $z$. Applying Theorem 7.1 in \cite{BGKR10} to $\whatR$, with the Cauchy contour $\Ga:=\BT$, we thus get the following result: 

\begin{corollary}
The function $R$ in \eqref{trans} admits a left canonical factorization if and only if  both $A$ and $A^\times$ are dichotomous and with respect to the state space decompositions $\cX_- \dotplus \cX_+=\cX=\cX_-^\times \dotplus \cX_+^\times$ above, we have the \emph{matching decomposition} $\cX=\cX_-^\times\dotplus \cX_+$; then write $P$ for the projection of $\cX$ along $\cX_+$ onto $\cX_-^\times$. 

When the above conditions hold, for every factorization $D=D_+D_-$ with $D_\pm$ invertible, $R(z)=W_+(z)W_-(z)$ is a left canonical Wiener-Hopf factorization, where
\begin{equation}\label{XiPhiDichot}
\begin{aligned}
W_+(z):=&\,D_+ + zC(I_\cX-zA)^{-1}(I_\cX-P)BD_-^{-1},\\
W_-(z):=&\,D_- + zD_+^{-1}CP(I_\cX-zA)^{-1}B,\quad z\in\T,
\end{aligned}
\end{equation}
with inverses given by 
\begin{equation}\label{XiPhiDichotInv}
\begin{aligned}
W_+(z)^{-1}&=D_+^{-1} - zD_+^{-1}C(I_\cX-P)(I_\cX-zA^\times)^{-1}BD^{-1},\\  
W_-(z)^{-1}&=D_-^{-1} - zD^{-1}C(I_\cX-zA^\times)^{-1}PBD_-^{-1}, \quad z\in\T.
\end{aligned}
\end{equation}
\end{corollary}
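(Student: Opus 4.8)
The plan is to read the corollary off from the Bart--Gohberg--Kaashoek canonical factorization theorem, \cite[Theorem 7.1]{BGKR10}, applied to the auxiliary function $\whatR$ introduced just above, and then to transport the conclusion back to $R$ through the relation $R(z)=D_+\whatR(1/z)D_-$. The key points are that $\whatR$ is already written in the standard realization form to which that theorem applies, and that the substitution $z\mapsto 1/z$ simultaneously interchanges interior and exterior of $\BT$ and converts a right canonical factorization of $\whatR$ into a left canonical factorization of $R$.

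First I would fix the realization of $\whatR$. Writing $\hat C:=D_+^{-1}C$ and $\hat B:=BD_-^{-1}$, the identity established just above exhibits $\whatR(z)=I+\hat C(zI_\cX-A)^{-1}\hat B$ as a transfer function with main operator $A$, external operators $\hat B,\hat C$, and invertible feedthrough $\hat D=I$. I would then verify that its associate main operator is
\[
A-\hat B\hat D^{-1}\hat C=A-BD_-^{-1}D_+^{-1}C=A-BD^{-1}C=A^\times ,
\]
with $A^\times$ as in \eqref{Across}, so that the two operators governing the factorization of $\whatR$ relative to $\BT$ are exactly $A$ and $A^\times$. Because the main operator is $A$, the spectral subspaces of $\whatR$ inside and outside $\BT$ are $\cX_+$ and $\cX_-$; because the associate operator is $A^\times$, the associate spectral subspaces are $\cX_+^\times$ and $\cX_-^\times$.

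Next I would invoke \cite[Theorem 7.1]{BGKR10}: it asserts that $\whatR$ admits a right canonical factorization relative to $\BT$ if and only if $A$ and $A^\times$ have no spectrum on $\BT$ (both dichotomous) and the spectral subspace of $A$ inside $\BT$ is complementary to the spectral subspace of $A^\times$ outside $\BT$, which in the present notation is precisely the matching condition $\cX=\cX_-^\times\dotplus\cX_+$, the supporting projection being the projection $P$ onto $\cX_-^\times$ along $\cX_+$. The same theorem supplies the supporting-projection formulas for the factors $\whatR_-,\whatR_+$ and their inverses in terms of $A$, $A^\times$, $\hat B$, $\hat C$ and $P$, namely $\whatR_-(z)=I+\hat C(zI_\cX-A)^{-1}(I_\cX-P)\hat B$ and $\whatR_+(z)=I+\hat CP(zI_\cX-A)^{-1}\hat B$, together with the matching inverse expressions built from $A^\times$.

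Finally I would transport everything to $R$. Since $z\mapsto 1/z$ leaves the subspaces $\cX_\pm,\cX_\pm^\times$ untouched (they are read off directly from $A$ and $A^\times$) but swaps interior and exterior as regions of analyticity, it turns $\whatR=\whatR_-\whatR_+$ into a left canonical factorization $R=W_+W_-$ with $W_+(z):=D_+\whatR_-(1/z)$ inner and $W_-(z):=\whatR_+(1/z)D_-$ outer; this yields the asserted equivalence at once. To recover the explicit formulas I would substitute $\hat C=D_+^{-1}C$, $\hat B=BD_-^{-1}$, use $((1/z)I_\cX-A)^{-1}=z(I_\cX-zA)^{-1}$, and simplify, which reproduces \eqref{XiPhiDichot} and, from the corresponding inverse formulas, \eqref{XiPhiDichotInv}. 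I expect the only genuine difficulty to be purely bookkeeping: keeping the simultaneous right$\leftrightarrow$left and interior$\leftrightarrow$exterior flips consistent, and tracking correctly whether each factor carries $P$ or $I_\cX-P$ and on which side, so that the conjugation by $D_\pm$ (which is clean precisely because $\hat D=D_+^{-1}DD_-^{-1}=I$) delivers the stated expressions rather than their mirror images.
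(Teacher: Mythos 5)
Your proposal is correct and follows essentially the same route as the paper: the paper likewise forms $\whatR(z)=D_+^{-1}R(1/z)D_-^{-1}=I+D_+^{-1}C(zI_\cX-A)^{-1}BD_-^{-1}$, notes that $A^\times$ is unchanged because $\hat B\hat C=BD^{-1}C$, applies Theorem 7.1 of \cite{BGKR10} with $\Gamma=\BT$ to get a right canonical factorization of $\whatR$, and transports it back through $z\mapsto 1/z$ (which exchanges right and left factorizations) exactly as you do, with your substitutions $((1/z)I_\cX-A)^{-1}=z(I_\cX-zA)^{-1}$ and $D_-^{-1}D_+^{-1}=D^{-1}$ reproducing \eqref{XiPhiDichot} and \eqref{XiPhiDichotInv}. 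The only cosmetic point is your use of ``inner'' and ``outer'' for the factors $W_+$ and $W_-$, which should be read as ``associated with the interior/exterior of $\BT$'' rather than in the Hardy-space sense.
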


Note that the invariances of $\cX_+$ under $A$ and $\cX_-^\times$ under $A^\times$ imply that, with respect to the decomposition $\cX=\cX_-^\times\dotplus \cX_+$, we have the decompositions $P=\sbm{I&0\\0&0}$,
\[
A=\mat{A_{11}&0 \\ A_{21} & A_+},\quad A^\times =\mat{A_-^\times& A_{12}^\times \\ 0&A_{22}^\times},\quad B=\mat{B_-^\times\\\wtilB_+},\quad C=\mat{C_-^\times&\wtilC_+} 
\] 
with $A_+$ as in \eqref{DichotDecompA} and $A_-^\times$ as in \eqref{DichotDecompAcross}. Note that $\wtilC_+=C_+$. It then follows from \cite[Lemma 5.9]{BGKR08} that $\si(A_{11})\subset \BC\backslash \ov{\BD}$ and $\si(A_{22}^\times)\subset \BD$, and that $W_\pm(z)$ and their inverses are also given by 
\begin{equation}\label{eq:WpmInvariant}
\begin{aligned}
W_+(z)&=D_+ + zC_+(I_{\cX_+}-zA_+)^{-1}\wtilB_+D_-^{-1},\quad z\in\overline \D,\\ 
W_+(z)^{-1}&=D_+^{-1} - zD_+^{-1}C_+(I_{\cX_+}-zA_{22}^\times)^{-1}\wtilB_+D^{-1},\quad z\in\overline \D,\\ 
W_-(z)&=D_- + zD_+^{-1}C_-^\times(I_{\cX_-^\times}-zA_{11})^{-1}B_-^\times,\quad z\in\C\setminus\D,\\ 
W_-(z)^{-1}&=D_-^{-1} - zD^{-1}C_-^\times (I_{\cX_-^\times}-zA_-^\times)^{-1}B_-^\times D_-^{-1},\quad z\in\C\setminus\D,
\end{aligned}
\end{equation}
confirming that $W_+(z)$ and $W_+(z)^{-1}$ (that $W_-(z)$ and $W_-(z)^{-1}$) have analytic extensions to open neighborhoods of $\overline\D$ (of $\C\setminus\D$). Hence, $R(z)=W_+(z)W_-(z)$ is indeed a left canonical Wiener-Hopf factorization.

In order to obtain a right canonical Wiener-Hopf factorization of a function $R(z)$ given by a dichotomous realization \eqref{trans} it is necessary and sufficient for $A$ and $A^\times$ to be dichotomous and for $\cX$ to decompose as $\cX=\cX_-\dotplus \cX_+^\times$. In that case, write $P^\times$ for the projection on $\cX$ along $\cX_-$ onto $\cX_+^\times$. As above, we get a right canonical Wiener-Hopf factorization $R(z)=V_-(z)V_+(z)$ of $R(z)$ with the factors and their inverses obtained by replacing $P$ by $P^\times$ in \eqref{XiPhiDichot} and \eqref{XiPhiDichotInv}, in such a way that $V_\pm$ is obtained from the formula for $W_\mp$, and swapping $D_+$ and $D_-$:
\begin{equation}\label{PsiThetaDichot}
\begin{aligned}
{V_-}(z)&:=D_- + zC(I_\cX-zA)^{-1}(I_\cX-P^\times)BD_+^{-1},\\ 
{V_+}(z)&:=D_+ + zD_-^{-1}CP^\times(I_\cX-zA)^{-1}B,
\end{aligned}
\end{equation}
with inverses given by 
\begin{equation}\label{PsiThetaDichotInv}
\begin{aligned}
{V_-}(z)^{-1}&=D_-^{-1} - zD_-^{-1}C(I_\cX-P^\times)(I_\cX-zA^\times)^{-1}BD^{-1},\\  
{V_+}(z)^{-1}&=D_+^{-1} + zD^{-1}C(I_\cX-zA^\times)^{-1}P^\times BD_+^{-1}.
\end{aligned}
\end{equation}
Again, using the invariance of $\cX_-$ and $\cX_+^\times$, it follows that with respect to the matching decomposition $\cX=\cX_-\dotplus \cX_+^\times$, $P^\times=\sbm{0&0\\0&I}$, 
\begin{equation}
A =\mat{A_- & A_{12}\\ 0 & A_{22}},\quad 
A^\times=\mat{A_{11}^\times&0\\A_{21}^\times & A_+^\times},\quad 
B=\mat{\widetilde B_-\\ B_+^\times},\quad 
C=\mat{C_- & C_+^\times},
\end{equation}
with $A_+$, $A_-^\times$ and $C_-$ as in the decompositions \eqref{DichotDecompA}, \eqref{DichotDecompAcross} and \eqref{DichotDecompBC}. As before, $\si(A_{22})\subset \BC\backslash \ov{\BD}$ and $\si(A_{11}^\times)\subset \BD$, and the right canonical version of \eqref{eq:WpmInvariant} is
\begin{equation}\label{eq:RightWHinvariant}
\begin{aligned}
V_-(z)&=D_- + zC_-(I_{\cX_-}-zA_-)^{-1}\wtilB_-D_+^{-1},\quad z\in\overline \D,\\ 
V_-(z)^{-1}&=D_-^{-1} - zD_-^{-1}C_-(I_{\cX_+}-zA_{11}^\times)^{-1}\wtilB_-D^{-1},\quad z\in\overline \D,\\ 
V_+(z)&=D_+ + zD_-^{-1}C_+^\times(I_{\cX_-^\times}-zA_{22})^{-1}B_+^\times,\quad z\in\C\setminus\D,\\ 
V_+(z)^{-1}&=D_+^{-1} - zD^{-1}C_+^\times (I_{\cX_-^\times}-zA_+^\times)^{-1}B_+^\times D_+^{-1},\quad z\in\C\setminus\D.
\end{aligned}
\end{equation}
Under the assumptions made in this paragraph, the formulas in \eqref{eq:RightWHinvariant} give a right canonical Wiener-Hopf factorization $R(z)=V_-(z)V_+(z)$, for all factorizations $D=D_-D_+$ with $D_\pm$ invertible.
 
Following Section 12.3 in \cite{BGKR10} we can also associate a Riccati equation with the above paired invariant subspace approach. We include here the details for the right canonical Wiener-Hopf factorization.
We start from the decomposition $\cX=\cX_- \dotplus \cX_+$ and the corresponding decompositions of $A,$ $B$ and $C$ in \eqref{DichotDecompA}--\eqref{DichotDecompBC}. Then 
\begin{equation}\label{eq:AtimesSplit}
	A^\times =\mat{A_- - B_- D^{-1} C_- & -B_- D^{-1}C_+ \\ -B_+D^{-1}C_- & A_+ -B_+D^{-1}C_+ }.
\end{equation}
We next apply Theorem 12.5 (and its proof) in \cite{BGKR10} to $R(\frac{1}{z})$ in \eqref{trans}, adjusted to the left canonical Wiener-Hopf factorization setting. 
%
It follows that $R(z)$ admits a right canonical Wiener-Hopf factorization if and only if $\cX={\rm Im\,} \sbm{I \\ 0} \dot+ \cX_+^\times$, see also Theorem 6.1 in \cite{BGKR08}. This is equivalent to saying that $\cX_+^\times ={\rm Im\,}\sbm{ \whatQ \\ I}$ for some bounded operator $\widehat Q$. Invariance of $\cX_+^\times$ under $A^\times$ in \eqref{eq:AtimesSplit} is equivalent to 
 $\whatQ\in\cB(\cX_-,\cX_+)$ being a solution to the Riccati equation 
\begin{equation}\label{RiccDichot}
\whatQ B_+ D^{-1} C_- \whatQ - \whatQ(A_+-B_+ D^{-1} C_+) + (A_- -B_-D^{-1}C_-)\whatQ -B_-D^{-1}C_+=0.
\end{equation}
Setting $S=\mat{ I & \whatQ \\ 0 & I}$ and computing $S^{-1}A^\times S$ using the Riccati equation, one sees that $\cX_+^\times$ is the spectral subspace of $A^\times$ corresponding to the open unit disc if and only if the following two spectral inclusions hold
\begin{equation}\label{StabCond+}
\si(A_+ - B_+ D^{-1} (C_+ + C_- \whatQ))\subset \BD \mbox{ and }
\si(A_- - (B_- - \whatQ B_+)D^{-1} C_-)\subset \BC \backslash \ov{\BD}.
\end{equation}
For later purposes, we point out that the Riccati equation \eqref{RiccDichot} can be rewritten as  
\begin{equation}\label{RiccDichot2}
A_-\whatQ - \whatQ A_+ + (\whatQ B_+ -B_-)D^{-1}(C_-\whatQ + C_+)=0.
\end{equation}

\subsection{Dichotomous realizations versus stable representations}\label{SubS:Connect}

In order to make a connection between the two approaches to canonical Wiener-Hopf factorization, we need to assume that $R(z)$ is defined and analytic in a neighborhood of $0$. To achieve this in the stable representation \eqref{FKRreal}, we shall assume that $\al_-$ is invertible, in which case $R(0)=\de - \ga_- \al_-^{-1}\be_-$.

\begin{proposition}\label{P:FKRdichot}
An operator-valued function $R(z)$ is the transfer function of a dichotomous system if and only if it has a stable representation \eqref{FKRreal} with $\alpha_-$ invertible. In case $R(z)$ is given by \eqref{FKRreal} with $\al_+$ and $\alpha_-$ exponentially stable and $\alpha_-$ invertible, then $R(z)$ is the transfer function of the dichotomous system \eqref{DichotSys} with
\begin{equation}\label{FKRtoDichot}
\mat{A&B\\C&D}:=\left[ \begin{array}{cc|c} A_-&0&B_-\\0&A_+&B_+\\ \hline C_-&C_+&D \end{array} \right]
=\left[ \begin{array}{cc|c} \alpha_-^{-1}&0& \alpha_-^{-1}\beta_-\\0&\al_+&\be_+\\ \hline -\gamma_- \alpha_-^{-1}&\ga_+&\de-\gamma_- \alpha_-^{-1}\beta_- \end{array} \right].
\end{equation}
In particular, $R(z)$ does not have a pole at $0$. Conversely, if $R(z)$ is the transfer function of the dichotomous system \eqref{DichotSys} with $A$ decomposing as in \eqref{DichotDecompA} and $B$ and $C$ as in \eqref{DichotDecompBC}, then
$R(z)$ is given by \eqref{FKRreal} with
\begin{equation}\label{DichotToFKR}
\begin{aligned}
\de:=D-C_-A_-^{-1}B_-,\quad \ga_+:=& C_+,\quad  \al_+:=A_+,\quad  \be_+:=B_+,\\ 
\gamma_-:=-C_-A_-^{-1},\quad \alpha_-:=& A_-^{-1},\quad \beta_-:=A_-^{-1}B_-.
\end{aligned}
\end{equation}
\end{proposition}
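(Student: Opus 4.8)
The plan is to verify Proposition~\ref{P:FKRdichot} by direct algebraic manipulation, establishing the two implications separately while exploiting the fact that each direction amounts to an identity of rational functions that can be checked termwise against the Fourier expansion \eqref{eq:Rfourier}. The key observation is that the stable representation \eqref{FKRreal} and the dichotomous transfer function \eqref{trans} both decompose $R(z)$ into a part analytic on $\overline{\BD}$ and a part analytic on $\BC\setminus\BD$ vanishing at $\infty$; matching these two parts is exactly the content of the proposition.

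First I would treat the forward direction: given \eqref{FKRreal} with $\alpha_+$ and $\alpha_-$ exponentially stable and $\alpha_-$ invertible, I claim $R(z)$ equals the transfer function \eqref{trans} of the dichotomous system \eqref{DichotSys} with data \eqref{FKRtoDichot}. The $\cX_+$-block is immediate since $A_+:=\al_+$, $B_+:=\be_+$, and the ``plus'' summand of \eqref{trans} restricted to $\cX_+$ reproduces $z\ga_+(I_{\cX_+}-z\al_+)^{-1}\be_+$ verbatim. The real work is the $\cX_-$-block. With $A_-:=\alpha_-^{-1}$, $B_-:=\alpha_-^{-1}\beta_-$, $C_-:=-\gamma_-\alpha_-^{-1}$, I would rewrite the ``minus'' summand of \eqref{trans},
\[
z C_-(I_{\cX_-}-zA_-)^{-1}B_- = -z\gamma_-\alpha_-^{-1}(I_{\cX_-}-z\alpha_-^{-1})^{-1}\alpha_-^{-1}\beta_-,
\]
and use the resolvent identity $\alpha_-^{-1}(I_{\cX_-}-z\alpha_-^{-1})^{-1}\alpha_-^{-1} = (\alpha_- - zI_{\cX_-})^{-1} \cdot \text{(constant)}$ to convert it into the form $\gamma_-(zI_{\cX_-}-\alpha_-)^{-1}\beta_-$ plus a constant term. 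That leftover constant is precisely what forces the feedthrough in \eqref{FKRtoDichot} to be $D=\de-\gamma_-\alpha_-^{-1}\beta_-$ rather than $\de$; I expect this bookkeeping of the constant term to be the main obstacle, since it is easy to drop or misattribute the $z=0$ contribution. The dichotomy of the resulting $A$ is then automatic: $A_+=\al_+$ is exponentially stable by hypothesis, and $A_-=\alpha_-^{-1}$ has $A_-^{-1}=\alpha_-$ exponentially stable, so $\sigma(A)\cap\BT=\emptyset$. Analyticity at $0$ follows since \eqref{trans} is manifestly analytic near $0$.

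For the converse, given the dichotomous system with $A$ as in \eqref{DichotDecompA} and $B,C$ as in \eqref{DichotDecompBC}, I would run the same computation in reverse. Since $A_-$ is invertible, I set $\alpha_-:=A_-^{-1}$ and perform the same resolvent rearrangement to express $zC_-(I_{\cX_-}-zA_-)^{-1}B_-$ in the $\gamma_-(zI_{\cX_-}-\alpha_-)^{-1}\beta_-$ form, reading off $\gamma_-:=-C_-A_-^{-1}$ and $\beta_-:=A_-^{-1}B_-$; absorbing the constant again yields $\de:=D-C_-A_-^{-1}B_-$, and the $\cX_+$-block transfers directly to give $\ga_+:=C_+$, $\al_+:=A_+$, $\be_+:=B_+$. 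The exponential stability of $\alpha_+=A_+$ and of $\alpha_-^{-1}=A_-$ is exactly the dichotomy hypothesis, so this is a genuine stable representation with $\alpha_-$ invertible. Finally, to close the biconditional I would simply note that the two parameter translations \eqref{FKRtoDichot} and \eqref{DichotToFKR} are mutually inverse bijections, which both verifies the ``if and only if'' and confirms $R(0)=\de-\gamma_-\alpha_-^{-1}\beta_-=D$ is well-defined, so $R$ has no pole at $0$.
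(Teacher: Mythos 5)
Your proof is correct and essentially reproduces the paper's own argument: the same resolvent manipulation that extracts the constant $\gamma_-\alpha_-^{-1}\beta_-$ (so that the feedthrough becomes $D=\de-\gamma_-\alpha_-^{-1}\beta_-$), combined with the observation that \eqref{FKRtoDichot} and \eqref{DichotToFKR} are mutually inverse parameter translations, which the paper exploits to perform the computation only once rather than once per direction. One small slip to correct: in the converse direction, the dichotomy hypothesis gives exponential stability of $\alpha_-=A_-^{-1}$, not of $\alpha_-^{-1}=A_-$ as you wrote, since $\sigma(A_-)\subset\BC\setminus\ov{\BD}$.
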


\begin{proof}[\bf Proof]
The conversions in \eqref{FKRtoDichot} and \eqref{DichotToFKR} between the operators $A,B,C,D$ and $\al_+,\be_+,\ga_+,\de$,$\alpha_-,\beta_-,\gamma_-$, with $\alpha_-$ and $A_-$ invertible, are each others inverses, as is easily checked.  Assume that $A$, $B$, $C$, $D$, $\al_+$, $\be_+$, $\ga_+$, $\de$, $\alpha_-$, $\beta_-$ and $\gamma_-$, with $\alpha_-$ and $A_-$ invertible, are related as in \eqref{FKRtoDichot} and \eqref{DichotToFKR}; then
\begin{align*}
D+zC(I-zA)^{-1}B &=D + zC_+(I-zA_+)^{-1}B_+ + zC_-(I-zA_-)^{-1}B_-\\
&=D + z\ga_+(I-z\al_+)^{-1}\be_+ -z\gamma_- \alpha_-^{-1}(I-z\alpha_-^{-1})^{-1}\alpha_-^{-1}\beta_-\\
&=D+\gamma_-\alpha_-^{-1}\beta_-  + z\ga_+(I-z\al_+)^{-1}\be_+\\
&\qquad - \gamma_-(I-z\alpha_-^{-1})^{-1}\alpha_-^{-1}\beta_-\\
&=\de + z\ga_+(I-z\al_+)^{-1}\be_+ - \gamma_-(\alpha_--zI)^{-1}\beta_-\\
&=\de + z\ga_+(I-z\al_+)^{-1}\be_+ + \gamma_-(zI-\alpha_-)^{-1}\beta_-.
\end{align*}
This calculation shows that the transfer function \eqref{trans} of the dichotomous system \eqref{DichotSys} and the stable representation \eqref{FKRreal} are the same function. Finally, $A_+=\alpha_+$ and $A_-^{-1}=\alpha_-$ obviously have the required matching stability properties.
\end{proof}

\section{Analysis of the associated Riccati equations}\label{S:Ricc}

In this section we compare the Riccati equations that appear in the two approaches that were discussed in the previous section, for right canonical Wiener-Hopf factorization. These are \eqref{RiccFKR} in the setting of stable representations and \eqref{RiccDichot2} in the setting of dichotomous realizations. In order to compare the two Riccati equations, we work in the setting of a stable representation \eqref{FKRreal} with the additional assumption that $\al_-$ is an invertible operator, so that we can also work in the setting of dichotomous realizations via the connection provided by Proposition \ref{P:FKRdichot}. In that case, the Riccati equation \eqref{RiccDichot2}, in terms of the stable representation operators, after multiplying with $\alpha_-$ on the left, translates to 
\begin{equation}\label{RiccFKR2}
Q=\al_- Q\al_+ + (\be_--\al_- Q\be_+) (\de-\ga_-\al_-^{-1}\be_-)^{-1}(\ga_+ - \ga_-\al_-^{-1} Q).
\end{equation}
Conforming to \eqref{StabCond+} and \eqref{FKRtoDichot}, we call a solution $Q$ of \eqref{RiccFKR2} \emph{stabilizing} if the operators
\begin{equation}\label{RiccFKR2stableOps}
\begin{aligned}
	\alpha_+-\beta_+(\delta-\gamma_-\alpha_-^{-1}\beta_-)^{-1}(\gamma_+-\gamma_-\alpha_-^{-1}Q)
	\quad\text{and}&\\
	\alpha_-^{-1} + \alpha_-^{-1}(\beta_--\alpha_-Q\beta_+)(\delta-\gamma_-\alpha_-^{-1})^{-1}\gamma_-\alpha_-^{-1}&
\end{aligned}
\end{equation}
are stable, and anti-stable, respectively.

It turns out that in general the solution sets of \eqref{RiccFKR} and \eqref{RiccFKR2} need not coincide unless we restrict to stabilizing solutions. Indeed, for $Q\in\cB(\cX_+,\cX_-)$ to satisfy \eqref{RiccFKR}, the operator $\de-\ga_- Q \be_+$ needs to be invertible, which is not a requirement in \eqref{RiccFKR2}. On the other hand, for \eqref{RiccFKR2} necessarily $\de-\ga_-\al_-^{-1}\be_-=R(0)$ should be invertible, while one might be able to solve \eqref{RiccFKR} without this condition. We illustrate this in the following example. 

\begin{example}\label{E:RiccEqCounter}
We show that solution sets of the two Riccati equations need not coincide; in fact, neither solution set needs to be contained in the other.

The first example will show that it is possible that \eqref{RiccFKR} has a stabilizing solution while \eqref{RiccFKR2} does not, because $R(0)=\de-\ga_-\al_-^{-1}\be_-$ is not invertible. Choose $\alpha_+:=0$, $\beta_+:=\gamma_+:=1$, let $\alpha_-:=a\in(0,1)$, $\beta_-:=\gamma_-:=1$, and choose $\delta:=\tfrac{1}{a}$. Then \eqref{RiccFKR} becomes the equation $q=(1-aq)(\tfrac{1}{a}-q)^{-1}$. One easily checks that $q=a$ is a solution, for which we have $\alpha_-^\circ =0$ (compare also Corollary \ref{Cor:alpha-circ} below) and $\alpha_+^\circ=(a-\tfrac{1}{a})^{-1}$. Note that for $a$ small enough $\alpha_+^\circ$ will be stable, but $R(0)=0$ is not invertible.

The second example will show that it is possible that \eqref{RiccFKR2} has two solutions while \eqref{RiccFKR} has only one solution. Take $\beta_+:=1, \gamma_-:=1$, $\alpha_+$ and $\alpha_-$ scalar and non-zero, choose $\delta:=\tfrac{\gamma_+}{\alpha_+}$, and take $\beta_-, \gamma_+$ such that $\gamma_+\alpha_- -\beta_-\alpha_+ \not=0$. Then \eqref{RiccFKR} becomes
\[
q=\alpha_-\alpha_+ q +(\beta_- -\alpha_- q)(\tfrac{\gamma_+}{\alpha_+} -q)^{-1}(\gamma_+ -q\alpha_+).
\]
This is equivalent to $q=\alpha_- \alpha_+ q +\alpha_+\beta_- -\alpha_- \alpha_+ q=\alpha_+\beta_-$. So \eqref{RiccFKR} only has one solution. On the other hand, \eqref{RiccFKR2} becomes
\[
q=\alpha_-\alpha_+ q +(\beta_- -\alpha_- q)(\tfrac{\gamma_+}{\alpha_+} -\tfrac{\beta_-}{\alpha_-})^{-1}(\gamma_+-\tfrac{1}{\alpha_-}q).
\]
Multiply this equality by $\tfrac{\gamma_+\alpha_--\beta_-\alpha_+}{\alpha_-\alpha_+}$ in order to obtain
\[
\tfrac{\gamma_+\alpha_--\beta_-\alpha_+}{\alpha_-\alpha_+}(1-\alpha_-\alpha_+)q
= (\beta_- -\alpha_- q)(\gamma_+-\tfrac{1}{\alpha_-}q),
\]
or equivalently,
\[
\left(
\tfrac{\gamma_+}{\alpha_+} -\tfrac{\beta_-}{\alpha_-} -\gamma_+\alpha_-+\beta_-\alpha_+\right) q=
q^2-\alpha_-\gamma_+ q -\tfrac{\beta_-}{\alpha_-} q+\beta_-\gamma_+,
\]
which in turn equivalent is to 
\[
0=q^2-\left(\tfrac{\gamma_+}{\alpha_+}+\beta_-\alpha_+\right)q +\beta_-\gamma_+ 
=\left(q-\tfrac{\gamma_+}{\alpha_+}\right)\left(q-\beta_-\alpha_+\right).
\]
This shows that there are two solutions to \eqref{RiccFKR2}, of which one coincides with the unique solution to \eqref{RiccFKR}, in line with Proposition \ref{P:SolSetsRels} below.\hfill $\Box$
\end{example}

In the next two lemmas we investigate the two invertibility requirements, starting with the invertibility of $R(0)$. 

\begin{lemma}\label{L:R(0)inv}
Let $R(z)$ be given by the stable representation \eqref{FKRreal} with $\alpha_-$ invertible. Then the following two implications hold:
\begin{enumerate}
\item If $R(0)=\de-\gamma_-\alpha_-^{-1}\beta_-$ is invertible, then $\alpha_-^\circ$ is invertible for all $Q\in\cB(\cX_+,\cX_-)$ such that $\delta-\gamma_-Q\beta_+$ is invertible, and we have the inversion formula
\begin{equation}\label{alphacirc-inv}
(\alpha_-^{\circ})^{-1}=\alpha_-^{-1}+ \alpha_-^{-1}(\beta_--\alpha_- Q\be_+)(\de-\gamma_-\alpha_-^{-1}\beta_-)^{-1}\gamma_-\alpha_-^{-1}.
\end{equation}
\item If there exists a $Q\in\cB(\cX_+,\cX_-)$ such that $\delta-\gamma_-Q\beta_+$ and $\alpha_-^\circ$ are invertible, then $R(0)$ is invertible.
\end{enumerate}
\end{lemma}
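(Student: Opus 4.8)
The plan is to recognize that $\alpha_-^\circ$ and $R(0)$ are the two Schur complements of one and the same $2\times2$ block operator, so that parts (1) and (2) become the two halves of the standard Schur-complement invertibility equivalence. First I would introduce the abbreviations $\delta_Q:=\de-\gamma_- Q\be_+$ and $\beta_-^Q:=\be_--\al_- Q\be_+$, so that by \eqref{FKRalcirc} we have $\alpha_-^\circ=\al_--\beta_-^Q\delta_Q^{-1}\gamma_-$. The one genuinely computational step, and the crux of the whole argument, is the identity
\[
R(0)=\delta_Q-\gamma_-\al_-^{-1}\beta_-^Q,
\]
which follows by substituting the definitions: the expansion $\gamma_-\al_-^{-1}\beta_-^Q=\gamma_-\al_-^{-1}\be_--\gamma_- Q\be_+$ shows that the two occurrences of $\gamma_- Q\be_+$ cancel against each other, leaving $\de-\gamma_-\al_-^{-1}\be_-=R(0)$.

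Second, I would assemble the block operator
\[
\mathcal M:=\mat{\al_- & \beta_-^Q \\ \gamma_- & \delta_Q}\quad\text{on}\quad\cX_-\oplus\cU,
\]
and record its two complementary factorizations: an LDU factorization through the (always invertible) corner $\al_-$, with middle factor $\diag(\al_-,R(0))$ by the identity above, and a UDL factorization through the corner $\delta_Q$, with middle factor $\diag(\alpha_-^\circ,\delta_Q)$. In both factorizations the outer unipotent triangular factors are invertible, so $\mathcal M$ is invertible if and only if the corresponding block-diagonal middle factor is, i.e.\ if and only if both of its diagonal entries are invertible.

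Third, both parts of the lemma follow at once. Since $\al_-$ is invertible by the standing hypothesis and $\delta_Q$ is invertible in both statements, the LDU factorization gives ``$\mathcal M$ invertible $\iff R(0)$ invertible'' while the UDL factorization gives ``$\mathcal M$ invertible $\iff\alpha_-^\circ$ invertible''; combining them yields, under the invertibility of $\al_-$ and $\delta_Q$, the equivalence $R(0)$ invertible $\iff\alpha_-^\circ$ invertible, which is exactly (1) and (2). To obtain the explicit formula \eqref{alphacirc-inv} I would compare the $(1,1)$ entries of $\mathcal M^{-1}$ computed from the two factorizations: the UDL form gives $(\alpha_-^\circ)^{-1}$, while the LDU form gives $\al_-^{-1}+\al_-^{-1}\beta_-^Q R(0)^{-1}\gamma_-\al_-^{-1}$, and equating the two expressions yields \eqref{alphacirc-inv}. (Alternatively, one can verify \eqref{alphacirc-inv} directly, multiplying $\alpha_-^\circ$ against the claimed inverse and using $\gamma_-\al_-^{-1}\beta_-^Q=\delta_Q-R(0)$ to collapse the cross terms.)

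I do not anticipate a serious obstacle: once the cancellation identity is in hand, everything reduces to bookkeeping of which Schur complement corresponds to which corner block. The only point requiring care is keeping the invertibility hypotheses straight, especially in part (2), where one must first deduce invertibility of $\mathcal M$ from the invertibility of $\delta_Q$ and $\alpha_-^\circ$, and only then conclude invertibility of $R(0)$ using the invertibility of $\al_-$.
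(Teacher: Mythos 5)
Your proof is correct and follows essentially the same route as the paper's: the paper also forms the block operator $M_Q=\sbm{\alpha_- & \beta_--\alpha_-Q\beta_+\\ \gamma_- & \delta-\gamma_-Q\beta_+}$ and plays its two Schur complements ($R(0)$ with respect to $\alpha_-$, and $\alpha_-^\circ$ with respect to $\delta-\gamma_-Q\beta_+$) against each other, which is exactly your LDU/UDL argument. The only cosmetic difference is that the paper obtains \eqref{alphacirc-inv} by citing the standard Schur complement inversion formula, whereas you derive it by equating the $(1,1)$ entries of the two factorizations of $M_Q^{-1}$.
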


\begin{proof}[\bf Proof]
For $Q\in\cB(\cX_+,\cX_-)$ define 
\[
M_Q:=\mat{\alpha_-& \beta_--\alpha_- Q \be_+\\\gamma_- & \de-\gamma_- Q \be_+}.
\]
Then the Schur complement of $M_Q$ with respect to $\al_-$ is equal to 
\begin{align*}
M_Q/\alpha_- & = \de-\gamma_- Q \be_+ - \gamma_- \alpha_-^{-1} (\beta_--\alpha_- Q \be_+)= \de- \gamma_- \alpha_-^{-1}\beta_- =R(0).
\end{align*}
In particular, $M_Q$ is invertible if and only if $R(0)$ is invertible. Hence, invertibility of $M_Q$ is independent of the choice of $Q$. 

Now assume that $Q\in\cB(\cX_+,\cX_-)$ is such that $\de-\ga_- Q \be_+$ is invertible. Then the Schur complement of $M_Q$ with respect to $\de-\ga_- Q \be_+$ works out as 
\begin{align*}
M_Q/(\de-\ga_- Q \be_+) &= \alpha_- - (\beta_--\alpha_- Q \be_+)(\de-\ga_- Q \be_+)^{-1}\gamma_-=\alpha_-^\circ,
\end{align*}
and it follows that $M_Q$ is invertible if and only if $\alpha_-^\circ$ in \eqref{FKRalcirc}, for this choice of $Q$, is invertible. However, invertibility of $M_Q$ is also equivalent to invertibility of $R(0)$, and hence independent of $Q$. Therefore, $\alpha_-^\circ$ in \eqref{FKRalcirc} is either invertible for all $Q\in\cB(\cX_+,\cX_-)$ such that $\de-\ga_- Q \be_+$ is invertible, precisely when $R(0)$ is invertible, or invertible for no $Q\in\cB(\cX_+,\cX_-)$ such that $\de-\ga_- Q \be_+$ is invertible, precisely when $R(0)$ is not invertible. The formula for $(\alpha_-^\circ)^{-1}$ follows from the standard Schur complement inverse formula, cf., pages 28--29 in \cite{BGKR08}.   
\end{proof}

The proof above makes extensive use of Schur complements. The following formulas for $M_Q$ may also have been used to prove part of the lemma. One easily checks that 
\[
M_Q=\begin{bmatrix} \alpha_- & 0 \\ \gamma_- & I\end{bmatrix}
\begin{bmatrix} I & \alpha_-^{-1}\beta_- - Q\beta_+ \\ 0 & \delta -\gamma_-\alpha_-^{-1}\beta_-\end{bmatrix}, \qquad 
M_Q=\begin{bmatrix} \alpha_- & \beta_-  \\ \gamma_- & \delta \end{bmatrix}
\begin{bmatrix} I & -Q\beta_+ \\ 0 & I \end{bmatrix}.
\]

Negating the implications in Lemma \ref{L:R(0)inv}, we obtain the following implications: 

\begin{corollary}\label{Cor:alpha-circ}
Let $R(z)$ be given by the stable representation \eqref{FKRreal} with $\alpha_-$ invertible. Then:
\begin{enumerate}
\item If there exists a $Q\in\cB(\cX_+,\cX_-)$ such that $\delta-\gamma_-Q\beta_+$ is invertible but $\alpha_-^\circ$ in \eqref{FKRalcirc} is not invertible, then $R(0)=\delta-\gamma_-\alpha_-^{-1}\beta_-$ is not invertible.

\item If $R(0)$ is not invertible, then for each $Q\in\cB(\cX_+,\cX_-)$ with $\de-\ga_- Q \be_+$ invertible, $\alpha_-^\circ$ in \eqref{FKRalcirc} is not invertible.
\end{enumerate}
\end{corollary}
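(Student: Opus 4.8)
The plan is to obtain both statements by straightforward contraposition of the two implications already proved in Lemma \ref{L:R(0)inv}; no fresh computation is needed. The substantive content—that for any fixed $Q$ with $\de-\ga_- Q\be_+$ invertible, invertibility of $\alpha_-^\circ$ is equivalent to invertibility of $R(0)$—was established there by reading off two Schur complements of the auxiliary operator $M_Q$. The present corollary merely records the negative form of that equivalence, so the only thing requiring care is the correct flipping of the quantifiers when the implications are negated.

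For part (1) I would write down the contrapositive of Lemma \ref{L:R(0)inv}(1). That implication says: if $R(0)$ is invertible, then $\alpha_-^\circ$ is invertible for \emph{every} $Q\in\cB(\cX_+,\cX_-)$ with $\de-\ga_- Q\be_+$ invertible. Negating both sides, the conclusion ``$\alpha_-^\circ$ invertible for every admissible $Q$'' becomes ``$\alpha_-^\circ$ fails to be invertible for \emph{some} admissible $Q$''; hence the contrapositive reads: if there exists $Q\in\cB(\cX_+,\cX_-)$ with $\de-\ga_- Q\be_+$ invertible for which $\alpha_-^\circ$ is not invertible, then $R(0)$ is not invertible. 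This is exactly the assertion of (1).

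For part (2) I would take the contrapositive of Lemma \ref{L:R(0)inv}(2). That implication says: if there \emph{exists} $Q\in\cB(\cX_+,\cX_-)$ making both $\de-\ga_- Q\be_+$ and $\alpha_-^\circ$ invertible, then $R(0)$ is invertible. Negating, ``$R(0)$ not invertible'' forces the existential hypothesis to fail, i.e. no $Q$ simultaneously makes $\de-\ga_- Q\be_+$ and $\alpha_-^\circ$ invertible; equivalently, for \emph{every} $Q$ with $\de-\ga_- Q\be_+$ invertible, $\alpha_-^\circ$ is not invertible. This is precisely the assertion of (2).

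The main (indeed only) point to watch is the universal/existential interchange under negation—the ``for all'' in Lemma \ref{L:R(0)inv}(1) turning into ``there exists'' in Corollary \ref{Cor:alpha-circ}(1), and the ``there exists'' in Lemma \ref{L:R(0)inv}(2) turning into ``for all'' in Corollary \ref{Cor:alpha-circ}(2)—while keeping the side condition that $\de-\ga_- Q\be_+$ be invertible in force throughout. There is no genuine obstacle here, since the underlying equivalence between invertibility of $R(0)$ and of $\alpha_-^\circ$ has already done all the work.
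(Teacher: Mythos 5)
Your proposal is correct and is exactly the paper's own argument: the paper introduces Corollary \ref{Cor:alpha-circ} with the phrase ``Negating the implications in Lemma \ref{L:R(0)inv}, we obtain the following implications,'' i.e.\ precisely the contraposition you carry out, with the same careful interchange of the universal and existential quantifiers over admissible $Q$. Nothing further is needed.
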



The following example will show that $M_Q$ can be invertible without there existing a $Q$ such that $\delta -\gamma_- Q \beta_+$ is invertible. In fact, $R(0)$ is invertible, and hence $M_Q$, but $\delta-\gamma_- Q \beta_+$ is not invertible for any $Q$. In this case \eqref{RiccFKR} cannot have solutions, but \eqref{RiccFKR2} can.

\begin{example}\label{E:RiccEqCounter2}
Take 
\[
\delta:=\begin{bmatrix} 0 & 0 \\ 0 & 1 \end{bmatrix},\ \gamma_-:=\begin{bmatrix} 1 \\ 0 \end{bmatrix},\
\beta_+:=\begin{bmatrix} 0 & 1 \end{bmatrix} ,\ \beta_-:=\begin{bmatrix} 1 & 0 \end{bmatrix},\ \mbox{$\alpha_-, \alpha_+\in\BD$ with $\alpha_-\not= 0$.}
\]
We determine $\gamma_+$ at a later stage. For each $q\in\BC$ we have
\[
\delta-\gamma_- q \beta_+ =\begin{bmatrix} 0 & -q \\ 0 & 1\end{bmatrix},
\]
which is not invertible, while
\[
R(0)=\delta -\gamma_-\alpha_-^{-1}\beta_- =\begin{bmatrix} -\alpha_-^{-1} & 0 \\ 0 & 1 \end{bmatrix}
\]
is invertible. Moreover, the matrix $M_Q$ in the proof of Lemma \ref{L:R(0)inv} is given by 
\[
M_Q=\begin{bmatrix} \alpha_- & 1 & 0 \\ 1 & 0 &0 \\ 0 & 0 & 1 \end{bmatrix}
\begin{bmatrix} 1 & 0 & -q \\ 0 & 1 & 0 \\ 0 & 0 & 1\end{bmatrix},
\] 
which is invertible for every $q\in\BC$. However, since $\delta-\gamma_- q \beta_+$ is not invertible, $\al_-^\circ$ and $\al_+^\circ$ cannot be defined using \eqref{FKRalcirc}. Moreover, an easy computation shows that the right hand side of \eqref{alphacirc-inv} becomes 0, as it should be according to Lemma \ref{L:InvCon} below.  

The Riccati equation \eqref{RiccFKR2} becomes
\begin{align*}
q&=\alpha_-\alpha_+ q + \begin{bmatrix} 1 & -\alpha_- q\end{bmatrix}
\begin{bmatrix} -\alpha_- & 0 \\ 0 & 1\end{bmatrix} \left(\gamma_+ - \begin{bmatrix} \alpha_-^{-1}q \\ 0\end{bmatrix}   \right)\\
& =
\alpha_-\alpha_+ q +q + \begin{bmatrix} 1 & -\alpha_- q\end{bmatrix}
\begin{bmatrix} -\alpha_- & 0 \\ 0 & 1\end{bmatrix} \gamma_+ .
\end{align*}
Taking $\gamma_+:=\sbm{1\\0}$ will yield $q=\tfrac{1}{\alpha_+}$ as the unique solution. On the other hand, taking $\gamma_+:=\sbm{0\\\al_+}$, equation \eqref{RiccFKR2} becomes trivial, so that every $q\in\BC$ is a solution. Nonetheless, equation \eqref{RiccFKR2} does not have a stabilizing solution, since \eqref{StabCond+} in particular requires the right hand side of \eqref{alphacirc-inv} to be anti-stable; see also \eqref{FKRtoDichot}.\hfill$\Box$
\end{example} 

Even if $R(0)$ is invertible, and hence $M_Q$ in the proof of Lemma \ref{L:R(0)inv}, it can happen that $Q$ is such that $\de-\ga_- Q \be_+$ is not invertible, and hence $\alpha_-^\circ$ in \eqref{FKRalcirc} is not defined. In the next lemma we characterize when $\de-\ga_- Q \be_+$ is invertible, when we know in advance that $R(0)$ is invertible.  

\begin{lemma}\label{L:InvCon}
Let $R(z)$  be given by the stable representation \eqref{FKRreal} with $\alpha_-$ invertible. Assume that $R(0)=\de-\gamma_- \alpha_-^{-1}\beta_-$ is invertible. Let $Q\in\cB(\cX_+,\cX_-)$. Then $\de- \gamma_- Q \be_+$ is invertible if and only if 
the right-hand side of \eqref{alphacirc-inv} is invertible. In that case, $\alpha_-^{\circ}$ in \eqref{FKRalcirc} is invertible with $(\alpha_-^{\circ})^{-1}$ given by \eqref{alphacirc-inv}.
\end{lemma}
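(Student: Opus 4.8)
The plan is to reuse the block operator
\[
M_Q := \mat{\alpha_- & \beta_- - \alpha_- Q\be_+ \\ \gamma_- & \de - \gamma_- Q\be_+}
\]
from the proof of Lemma \ref{L:R(0)inv}, together with the two Schur complement identities recorded there: $M_Q/\alpha_- = R(0)$ and, whenever $\de - \gamma_- Q\be_+$ is invertible, $M_Q/(\de - \gamma_- Q\be_+) = \alpha_-^\circ$. Since $\alpha_-$ is invertible and $R(0) = M_Q/\alpha_-$ is invertible by hypothesis, the standard Schur complement criterion shows that $M_Q$ is invertible, and the standard inverse formula (cf.\ pages 28--29 in \cite{BGKR08}) exhibits the $(1,1)$ entry of $M_Q^{-1}$ as exactly the right-hand side of \eqref{alphacirc-inv}; denote this operator by $N_Q$. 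Thus $N_Q$ is the top-left corner of the (unconditionally invertible) operator $M_Q^{-1}$, and the whole lemma reduces to the standard observation that, for a fixed invertible block operator, its $(2,2)$ entry is invertible if and only if the $(1,1)$ entry of the inverse operator is invertible.

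First I would prove the forward implication. If $\de - \gamma_- Q\be_+$ is invertible, then by Lemma \ref{L:R(0)inv} the Schur complement of $M_Q$ with respect to that corner equals $\alpha_-^\circ$, and applying the inverse formula relative to the $(2,2)$ corner identifies the $(1,1)$ entry of $M_Q^{-1}$ with $(\alpha_-^\circ)^{-1}$. As this entry is $N_Q$, we obtain simultaneously that the right-hand side of \eqref{alphacirc-inv} is invertible and that $\alpha_-^\circ$ in \eqref{FKRalcirc} is invertible with $(\alpha_-^\circ)^{-1}$ given by \eqref{alphacirc-inv}, which also disposes of the ``in that case'' assertion.

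For the converse I would run the same Schur complement machinery on $M_Q^{-1}$ in place of $M_Q$. Assuming $N_Q$, the $(1,1)$ entry of the invertible operator $M_Q^{-1}$, is invertible, the Schur complement criterion applied to $M_Q^{-1}$ relative to its top-left corner shows that the Schur complement $M_Q^{-1}/N_Q$ is invertible, and the inverse formula identifies $(M_Q^{-1}/N_Q)^{-1}$ with the $(2,2)$ entry of $(M_Q^{-1})^{-1} = M_Q$, namely $\de - \gamma_- Q\be_+$. Hence $\de - \gamma_- Q\be_+$ is invertible, which closes the equivalence.

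The crux is this converse direction: $N_Q$ is defined, and could a priori be invertible, even when $\de - \gamma_- Q\be_+$ fails to be invertible, so one must genuinely exploit the invertibility of the ambient operator $M_Q^{-1}$ rather than any formula built from $\alpha_-^\circ$ (which need not even be defined). Example \ref{E:RiccEqCounter2} illustrates the consistency of the two sides: there $R(0)$ is invertible while $\de - \gamma_- Q\be_+$ is invertible for no $Q$, and correspondingly the right-hand side of \eqref{alphacirc-inv} collapses to $0$. Beyond keeping track of which corner each Schur complement is taken with respect to, there is no real obstacle, since every Schur complement identity and inverse formula used here is purely algebraic and hence holds verbatim for bounded operators on Hilbert spaces, needing no analytic input beyond the standing invertibility of $\alpha_-$ and of $R(0)$.
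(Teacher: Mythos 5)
Your proof is correct, but it follows a genuinely different (and arguably more economical) route than the paper's. The paper does not reuse $M_Q$; instead it introduces a second auxiliary operator
\[
N_Q:=\mat{\alpha_-& \beta_--\alpha_- Q \be_+\\ -\gamma_- & R(0)},
\]
with a sign flip in the $(2,1)$ entry and $R(0)$ (rather than $\de-\gamma_- Q\be_+$) in the $(2,2)$ corner, engineered so that its two Schur complements are exactly the two operators in the statement: $N_Q/\alpha_-=\de-\gamma_- Q\be_+$ and $N_Q/R(0)=\alpha_-\bigl(\text{RHS of \eqref{alphacirc-inv}}\bigr)\alpha_-$. Since both pivots $\alpha_-$ and $R(0)$ are invertible, the usual criterion (a block operator is invertible iff the Schur complement with respect to an invertible pivot is invertible), applied twice to the same $N_Q$, gives the equivalence at once, and the ``in that case'' clause is then quoted from Lemma \ref{L:R(0)inv}(1). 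You instead keep the original $M_Q$, observe it is invertible because $M_Q/\alpha_-=R(0)$ is, identify the right-hand side of \eqref{alphacirc-inv} as the $(1,1)$ entry of $M_Q^{-1}$, and invoke the standard fact that for an invertible block operator the $(2,2)$ entry is invertible iff the $(1,1)$ entry of the inverse is invertible; your two-sided Schur complement argument proves exactly this fact, which is the ``complementary Schur complements'' phenomenon of \cite{BT94}, already cited in this paper. What your version buys: no new auxiliary operator is needed, and the identity $(\alpha_-^\circ)^{-1}=$ RHS of \eqref{alphacirc-inv} drops out simultaneously with the equivalence rather than being imported separately from Lemma \ref{L:R(0)inv}. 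What the paper's $N_Q$ buys: both operators in the statement appear literally as Schur complements of a single operator, so no entries of inverse operators need to be computed or matched. Both arguments are purely algebraic and equally valid for bounded operators on Hilbert spaces.
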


\begin{proof}[\bf Proof]
For $Q\in\cB(\cX_+,\cX_-)$ we now define 
\[
N_Q:=\mat{\alpha_-& \beta_--\alpha_- Q \be_+\\ -\gamma_- & \de-\gamma_- \alpha_-^{-1}\beta_-}=\mat{\alpha_-& \beta_--\alpha_- Q \be_+\\ -\gamma_- & R(0)}.
\]
Then the Schur complement of $N_Q$ with respect to $\alpha_-$ is
\[
N_Q/\alpha_- = \de-\gamma_- \alpha_-^{-1}\beta_- + \gamma_- \alpha_-^{-1} (\beta_--\alpha_- Q \be_+)= \de- \gamma_- Q \be_+,
\]
while the Schur complement of $N_Q$ with respect to $R(0)$ is given by 
\begin{align*}
N_Q/ R(0)
&= \alpha_- + (\beta_--\alpha_- Q \be_+) (\de-\gamma_- \alpha_-^{-1}\beta_-)^{-1} \gamma_-\\
&=\alpha_-(\alpha_-^{-1}+\alpha_-^{-1}(\beta_-- \alpha_- Q\be_+) (\de-\gamma_-\alpha_-^{-1}\beta_-)^{-1}\gamma_-\alpha_-^{-1})\alpha_-.
\end{align*}
We can then conclude that $N_Q$ is invertible if and only if $\de- \gamma_- Q \be_+$ is invertible if and only if $\alpha_-^{-1}+\alpha_-^{-1}(\beta_-- \alpha_- Q\be_+) (\de-\gamma_-\alpha_-^{-1}\beta_-)^{-1}\gamma_-\alpha_-^{-1}$ is invertible. In this case, since $R(0)$ is assumed to be invertible, we know from Lemma \ref{L:R(0)inv} that $\al_-^\circ$ in \eqref{FKRalcirc} is invertible with inverse given by \eqref{alphacirc-inv}.  
\end{proof}

Much of the above relies on the existence of a $Q\in\cB(\cX_+,\cX_-)$ such that $\de- \gamma_- Q \be_+$ is invertible. However, such a $Q$ need not exist. In the case that $\cU$ is finite dimensional, we characterize when this happens in the next general lemma.

\begin{lemma}\label{L:InvExt}
Let $U\in\cB(\BC^n)$, $V\in\cB(\cX_1,\BC^n)$ and $W\in\cB(\BC^n,\cX_2)$, with $\cX_1$ and $\cX_2$ possibly infinite dimensional Hilbert spaces. Then there exists a $Q\in\cB(\cX_2,\cX_1)$ such that $U-VQW$ is invertible if and only if $\im P_{(\im V)^\perp} U= (\im V)^\perp$ and $\kr U|_{\kr W}=\{0\}$. 
\end{lemma}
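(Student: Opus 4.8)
The plan is to split the argument into a factorization step and a finite-dimensional perturbation step. Write $\cR:=\im V$ and $\cK:=\kr W$; both are finite-dimensional subspaces of $\BC^n$, and on $\BC^n$ an operator is invertible precisely when it is injective. First I would show that the operators realizable as $VQW$ are exactly
\[
\{VQW: Q\in\cB(\cX_2,\cX_1)\}=\{T\in\cB(\BC^n): \im T\subseteq\cR,\ \kr T\supseteq\cK\}.
\]
The inclusion $\subseteq$ is clear. For $\supseteq$, given such a $T$, the containment $\kr W\subseteq\kr T$ lets me factor $T=\widehat T W$, where $\widehat T$ is defined on the (finite-dimensional, hence closed) subspace $\im W$ by $\widehat T(Wx):=Tx$ and set to $0$ on $(\im W)^\perp$; since $\im\widehat T=\im T\subseteq\cR=\im V$ and $\widehat T$ has finite rank, choosing preimages under $V$ of a basis of $\cR$ yields $\widehat T=VQ$, so $T=VQW$. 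This reduces the lemma to the statement: there is a $T$ with $\im T\subseteq\cR$, $\kr T\supseteq\cK$ and $U-T$ invertible if and only if the two stated conditions hold.

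Next I would record that the two hypotheses read $\im U+\cR=\BC^n$ (which is the condition $\im P_{(\im V)^\perp}U=(\im V)^\perp$, because $\kr P_{(\im V)^\perp}=\cR$) and $\kr U\cap\cK=\{0\}$, i.e.\ $U$ is injective on $\cK$. Necessity is immediate: if $U-T$ is invertible with $\im T\subseteq\cR$ and $\kr T\supseteq\cK$, then $\BC^n=\im(U-T)\subseteq\im U+\cR$ forces the first condition, while any $x\in\kr U\cap\cK$ gives $Ux=Tx=0$, hence $(U-T)x=0$ and $x=0$, forcing the second.

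For sufficiency I would fix a complement $\BC^n=\cK\oplus\cC$ and pass to the quotient $\cM:=\BC^n/U(\cK)$ with quotient map $\pi$. Condition (b) makes $U|_\cK$ injective, so $\dim U(\cK)=\dim\cK$ and therefore $\dim\cC=\dim\cM$. Since $T$ vanishes on $\cK$, one checks (using injectivity of $U|_\cK$) that $U-T$ is injective if and only if the induced map $\pi\,(U-T)|_\cC:\cC\to\cM$ is injective, and that $T|_\cC$ may be taken to be an arbitrary linear map $\cC\to\cR$; thus the induced map equals $\pi U|_\cC-\psi$ with $\psi$ ranging over all of $\mathrm{Hom}(\cC,\pi(\cR))$. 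Condition (a) is exactly $\im(\pi U|_\cC)+\pi(\cR)=\cM$, so, writing $\cM=\pi(\cR)\oplus\cN'$ and using that the $\cN'$-component of $\pi U|_\cC$ is then surjective, I can choose $\psi$ on the kernel of that component so as to make the $\pi(\cR)$-component an isomorphism there; a one-line check then gives injectivity of $\pi U|_\cC-\psi$, hence bijectivity by the dimension count. Lifting $\psi$ to a $T|_\cC:\cC\to\cR$ (and taking $T|_\cK:=0$) produces the required $T$, and the factorization step converts it into the desired $Q$.

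The routine parts are the factorization identity and the necessity direction; the main obstacle is the sufficiency construction, where both hypotheses must be used in tandem — condition (b) to guarantee $\dim\cC=\dim\cM$ (so that injectivity upgrades to invertibility) and condition (a) to guarantee the surjectivity that allows the admissible perturbation $\psi$ to be placed. The quotient and dimension-count argument above is the cleanest route I see through this.
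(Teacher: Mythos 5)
Your proof is correct, and it diverges from the paper's in the sufficiency direction, so a comparison is in order. Your first step---identifying $\{VQW : Q\in\cB(\cX_2,\cX_1)\}$ with the set of all $T\in\cB(\BC^n)$ with $\im T\subseteq \im V$ and $\kr T\supseteq \kr W$---is, in coordinate-free form, exactly the paper's opening move: there one decomposes $\BC^n=(\im V)^\perp\oplus\im V$ and $\BC^n=\kr W\oplus(\kr W)^\perp$, writes $V=\sbm{0\\V_2}$, $W=\sbm{0&W_2}$, and uses a right inverse $V_2^+$ and a left inverse $W_2^+$ (which exist by finite-dimensionality of $\im V$ and $(\kr W)^\perp$) to show that the $(2,2)$ block of $U-VQW$ can be prescribed arbitrarily while the other blocks of $U$ are untouched. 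The necessity argument is also essentially identical. Where you genuinely differ is in constructing the invertible completion. The paper stays with orthogonal decompositions: it refines to a $4\times 4$ block decomposition isolating the invertible corner $\wtilU_{11}$ of $U_{11}=P_{(\im V)^\perp}U|_{(\kr W)^\perp}$, prescribes the free block $T$ by an explicit formula, verifies invertibility via the Schur complement with respect to $\wtilU_{11}$, and closes with the dimension count $\dim\kr P_{(\im U_{11})^\perp}U_{12}=\dim\bigl(\im U_{21}|_{\kr U_{11}}\bigr)^\perp$. You instead pass to the quotient $\cM=\BC^n/U(\kr W)$: hypothesis (b) is spent once, to give $\dim\cC=\dim\cM$ and the equivalence ``$U-T$ injective iff $\pi(U-T)|_\cC$ injective,'' while hypothesis (a) becomes surjectivity of the $\cN'$-component of $\pi U|_\cC$, after which $\psi$ is chosen on the kernel of that component. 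The trade-off: the paper's route is fully constructive (an explicit block formula for $T$, hence for $Q$) but carries substantial bookkeeping; yours is shorter and more structural, at the price of non-constructive choices of complements and linear sections. One small step you should spell out: choosing $\psi$ so that the $\pi(\cR)$-component of $\pi U|_\cC-\psi$ is an isomorphism on the kernel of the $\cN'$-component requires that this kernel and $\pi(\cR)$ have equal dimension; this follows by rank--nullity from $\dim\cC=\dim\cM$ and the surjectivity of the $\cN'$-component, and it is precisely the point where your two hypotheses interlock, so it deserves an explicit line.
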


\begin{proof}
Set $\cV:=\im V$ and $\cW:=(\kr W)^\perp$. Assume that $Q\in\cB(\cX_1,\cX_1)$ is such that $U-VQW$ is invertible. Then necessarily $\cV^\perp =\im P_{\cV^\perp}(U-VQW)=\im P_{\cV^\perp} U$ and $\{0\}=\kr (U-VQW)|_{\cW^\perp}=\kr U|_{\cW^\perp}$. Hence the conditions $\im P_{(\im V)^\perp} U= (\im V)^\perp$ and $\kr U|_{\kr W}=\{0\}$ are necessary.  

For the converse claim, assume that $\im P_{(\im V)^\perp} U= (\im V)^\perp$ and $\kr U|_{\kr W}=\{0\}$. Decompose $\BC^n=\cV^\perp \oplus \cV$ and $\BC= \cW^\perp \oplus \cW$, and write out the operators with respect to this decomposition:
\[
U=\mat{U_{11}&U_{12}\\U_{21}&U_{22}},\quad V=\mat{0\\V_2},\quad W=\mat{0 & W_2}. 
\]
By definition if $\cV$ and $\cW$, then $\im V_2=\cV$ and $\kr W_2=\{0\}$. Since $\cV$ and $\cW$ are finite dimensional, it the follows that $V_2$ has a right inverse $V_2^+$ and $W_2$ has a left inverse $W_2^+$. It then follows that for any $T\in\cB(\cW,\cV)$, if we take $Q:=V_2^+ (U_{22}-T) W_2^+$, then $U_{22}-V_2 Q W_2=T$, and hence 
\[
U-VQW = \mat{U_{11}&U_{12}\\U_{21}&U_{22}- V_2 Q W_2}=\mat{U_{11}&U_{12}\\U_{21}&T}.
\]
The question thus becomes whether we can replace the $(2,2)$ entry in $U$ in such a way that the resulting matrix in $\BC^{n \times n}$ becomes invertible. Now further decompose
\begin{align*}
\cV^\perp = (\kr U_{11})^\perp \oplus \kr U_{11},\ \ & \ \ \cW_\perp =\im U_{11} \oplus (\im U_{11})^\perp, \\
\cV= \im U_{21}|_{\kr U_{11}} \oplus (\im U_{21}|_{\kr U_{11}})^\perp, \ & \ 
\cW= (\kr P_{\im U_{11}^\perp} U_{12})^\perp \oplus \kr P_{\im U_{11}^\perp} U_{12},
\end{align*}
and note that with respect to this further decomposition we have 
\[
U-VQW = \mat{\wtilU_{11}&0&U'_{12}&U''_{12}\\0&0&\wtilU_{12}&0\\U'_{21}&\wtilU_{21}&T_{11}&T_{12}\\U''_{21}&0&T_{21}&T_{22}}
\]
with $\wtilU_{11}$ invertible, and with $\wtilU_{12}$ and $\wtilU_{21}$ invertible by our assumption. Then $U-VQW$ is invertible if and only if the Schur complement of $U-VQW$ with respect to $\wtilU_{11}$ is invertible, and this Schur complement works out as
\[
U-VQW/\wtilU_{11}:=\mat{0&\wtilU_{12}&0\\\wtilU_{21}&T_{11}&T_{12}\\0&T_{21}&T_{22}}
-\mat{0\\U'_{21}\\U''_{21}}\wtilU_{11}^{-1}\mat{0&U'_{12}&U''_{12}}.
\]  
If we now take 
\[
T:=\mat{U'_{21}\\U''_{21}}\wtilU_{11}^{-1}\mat{U'_{12}&U''_{12}}+\mat{0&0\\0&T_0} 
\]
with $T_0\in\cB(\kr P_{\im U_{11}^\perp} U_{12},(\im U_{21}|_{\kr U_{11}})^\perp)$ invertible, then 
\[
U-VQW/\wtilU_{11}=\mat{0&\wtilU_{12}&0\\\wtilU_{21}&0&0\\0&0&T_0}
\]
is invertible. That one can take $T_0$ invertible, follows since $\dim \kr P_{\im U_{11}^\perp} U_{12}=\dim (\im U_{21}|_{\kr U_{11}})^\perp$, which in turn follows from a dimension count. 
\end{proof}

Next we consider what can happen to the solution sets of the two Riccati equations. For this purpose, given a stable representation \eqref{FKRreal} with $\alpha_-$ invertible, we define the following sets:
 \begin{align*}
\tu{Ricc}_1&:=\{Q\in\cB(\cX_+,\cX_-) \colon \mbox{$Q$ solves \eqref{RiccFKR}}\},\\
\tu{Ricc}_2&:=\{Q\in\cB(\cX_+,\cX_-) \colon \mbox{$Q$ solves \eqref{RiccFKR2}}\},\\
\tu{Ricc}_2'&:=\{Q\in\cB(\cX_+,\cX_-) \colon \mbox{$Q$ solves} \mbox{ \eqref{RiccFKR2} and $\de-\ga_- Q \be_+$ is invertible}\}. 
\end{align*}

\begin{proposition}\label{P:SolSetsRels}
Let $R(z)$  be given by the stable representation \eqref{FKRreal} with $\alpha_-$ invertible. If $R(0)$ is not invertible, then $\tu{Ricc}_2$ and $\tu{Ricc}_2'$ are empty, while $\tu{Ricc}_1$ might not be empty. 
If $R(0)$ is invertible, then $\tu{Ricc}_1=\tu{Ricc}_2' \subset \tu{Ricc}_2$, and the latter inclusion might be strict. 
\end{proposition}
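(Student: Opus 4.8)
The plan is to dispose of the two ``might'' clauses and the trivial inclusion at once, and then to concentrate on the real content, the equality $\tu{Ricc}_1=\tu{Ricc}_2'$ in the case that $R(0)$ is invertible. If $R(0)$ is not invertible, then the factor $(\de-\ga_-\al_-^{-1}\be_-)^{-1}=R(0)^{-1}$ appearing in \eqref{RiccFKR2} is not even defined, so \eqref{RiccFKR2} has no solutions and $\tu{Ricc}_2=\tu{Ricc}_2'=\emptyset$; that $\tu{Ricc}_1$ may nonetheless be non-empty is exactly the first part of Example~\ref{E:RiccEqCounter}, where $R(0)=0$ while $q=a$ solves \eqref{RiccFKR}. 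When $R(0)$ is invertible, the inclusion $\tu{Ricc}_2'\subseteq\tu{Ricc}_2$ is immediate from the definitions, and its possible strictness is witnessed by the second part of Example~\ref{E:RiccEqCounter}, where \eqref{RiccFKR2} has two solutions but only one of them makes $\de-\ga_- Q\be_+$ invertible.

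For the core equality I would first note that both $\tu{Ricc}_1$ and $\tu{Ricc}_2'$ sit inside the set of $Q$ for which $E:=\de-\ga_- Q\be_+$ is invertible: for $\tu{Ricc}_2'$ this is built in, and for $\tu{Ricc}_1$ it is forced, since $(\de-\ga_- Q\be_+)^{-1}$ occurs in \eqref{RiccFKR}. Hence it suffices to fix $Q$ with $E$ and $R(0)$ both invertible and to show that \eqref{RiccFKR} holds if and only if \eqref{RiccFKR2} holds. Writing $b:=\be_--\al_- Q\be_+$ and $\Delta:=Q-\al_- Q\al_+$, the two equations read $\Delta=bE^{-1}(\ga_+-\ga_- Q\al_+)$ and $\Delta=bR(0)^{-1}(\ga_+-\ga_-\al_-^{-1}Q)$; since they share the left-hand side $\Delta$, the task is to convert one right-hand side into the other. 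Two relations, checked by substituting the definitions, do the bookkeeping:
\[
E=R(0)+\ga_-\al_-^{-1}b,\qquad \ga_+-\ga_- Q\al_+=(\ga_+-\ga_-\al_-^{-1}Q)+\ga_-\al_-^{-1}\Delta.
\]

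The engine is a push-through identity already supplied by Lemma~\ref{L:R(0)inv}: combining $\al_-^\circ=\al_--bE^{-1}\ga_-=(I-bE^{-1}\ga_-\al_-^{-1})\al_-$ from \eqref{FKRalcirc} with the inverse formula \eqref{alphacirc-inv}, namely $(\al_-^\circ)^{-1}=\al_-^{-1}(I+bR(0)^{-1}\ga_-\al_-^{-1})$, yields $(I-bE^{-1}\ga_-\al_-^{-1})^{-1}=I+bR(0)^{-1}\ga_-\al_-^{-1}$. Starting from \eqref{RiccFKR}, I would insert the second relation above for the last factor, collect the term $bE^{-1}\ga_-\al_-^{-1}\Delta$ on the left, invert $I-bE^{-1}\ga_-\al_-^{-1}$ by the push-through identity, and finally collapse $(I+bR(0)^{-1}\ga_-\al_-^{-1})bE^{-1}=bR(0)^{-1}(R(0)+\ga_-\al_-^{-1}b)E^{-1}=bR(0)^{-1}$ using the first relation. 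The result is exactly \eqref{RiccFKR2}, and reading the same chain of equalities in reverse gives the converse.

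The main point to respect is that this equivalence is \emph{conditional}: the two right-hand sides are not equal as functions of $Q$ (their zero sets differ, as the example shows), so one cannot simply equate them and must genuinely use the Riccati relation itself, the manipulation being legitimate only where $E$ is invertible. This is precisely why $\tu{Ricc}_1$ coincides with $\tu{Ricc}_2'$ rather than with $\tu{Ricc}_2$, the surplus solutions of \eqref{RiccFKR2} being exactly those at which $E$ degenerates. Beyond this, the only care needed is to preserve the non-commutative operator ordering throughout and to confirm that each inverse invoked (of $E$, of $R(0)$, and of $I-bE^{-1}\ga_-\al_-^{-1}$) is justified under the standing hypotheses.
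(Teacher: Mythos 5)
Your proof is correct, and its skeleton coincides with the paper's: the non-invertible $R(0)$ case and both ``might'' clauses are handled exactly as in the paper via Example~\ref{E:RiccEqCounter}, and the core is reduced, as in the paper, to showing that for a fixed $Q$ with $E:=\de-\ga_-Q\be_+$ invertible (and $R(0)$ invertible), equation \eqref{RiccFKR} holds if and only if \eqref{RiccFKR2} does. The difference is in how that equivalence is executed. The paper, following \cite{LRR86}, left-multiplies the Riccati equation at hand by $\ga_-\al_-^{-1}$ and massages it into the intermediate identity \eqref{RiccToQuad}, namely $(\de-\ga_-Q\be_+)^{-1}(\ga_+-\ga_-Q\al_+)=R(0)^{-1}(\ga_+-\ga_-\al_-^{-1}Q)$, after which passing between \eqref{RiccFKR} and \eqref{RiccFKR2} is a mere substitution; the only inverses invoked are those of $E$ and $R(0)$, and \eqref{RiccToQuad} is reused later in the proof of Corollary~\ref{C:al+circform}. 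You instead isolate $\Delta=Q-\al_-Q\al_+$ and solve a linear equation for it, which forces you to invert $I-bE^{-1}\ga_-\al_-^{-1}$; this is legitimate, since Lemma~\ref{L:R(0)inv}(1) applies to \emph{every} $Q$ with $E$ invertible, not merely to solutions, so the push-through identity $(I-bE^{-1}\ga_-\al_-^{-1})^{-1}=I+bR(0)^{-1}\ga_-\al_-^{-1}$ is indeed available, and your two bookkeeping relations are exactly the ones used implicitly in the paper's computation. The trade-off: your route leans on Lemma~\ref{L:R(0)inv} and needs the invertibility of $\al_-^\circ$, which the paper's computation never requires, while the paper's route is self-contained and yields \eqref{RiccToQuad} as a by-product, which is precisely what Corollary~\ref{C:al+circform} later needs. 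Your closing observation — that the equivalence is conditional on invertibility of $E$ and cannot be an identity of right-hand sides, the surplus solutions of \eqref{RiccFKR2} being those where $E$ degenerates — matches the paper's framing and is the correct explanation of why $\tu{Ricc}_1$ equals $\tu{Ricc}_2'$ rather than $\tu{Ricc}_2$.
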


\begin{proof}[\bf Proof]
If $R(0)$ is not invertible, then the Riccati equation \eqref{RiccFKR2} can never be solved, hence $\tu{Ricc}_2$ and $\tu{Ricc}_2'$ are empty. That $\tu{Ricc}_1$ need not be empty in this case, follows from Example \ref{E:RiccEqCounter}. Now assume that $R(0)$ is invertible. The inclusion $\tu{Ricc}_2' \subset \tu{Ricc}_2$ is obvious, and Example \ref{E:RiccEqCounter} again shows that it can be strict.

To complete the proof, it remains to show that whenever $Q\in\cB(\cX_+,\cX_-)$ is such that $\de-\ga_- Q \be_+$ is invertible, then $Q$ is a solution to \eqref{RiccFKR} if and only if it is a solution to \eqref{RiccFKR2}. To show that this is the case, we follow the argument from \cite{LRR86}, where a similar result is proved in the Hermitian case.

Let $Q$ be a solution to \eqref{RiccFKR}. Then
\begin{align*}
\gamma_- Q \al_+
& = \gamma_- \alpha_-^{-1} \alpha_- Q \al_+\\
&= \gamma_- \alpha_-^{-1} Q  - \gamma_- \alpha_-^{-1} (\beta_--\alpha_- Q\be_+)(\de-\gamma_- Q\be_+)^{-1}(\ga_+-\gamma_- Q\al_+)\\
&=\gamma_- \alpha_-^{-1} Q  - (\gamma_- \alpha_-^{-1}\beta_--\gamma_- Q\be_+)(\de-\gamma_- Q\be_+)^{-1}(\ga_+-\gamma_- Q\al_+).
\end{align*}
Hence, we have that
\begin{align*}
&\ga_+-\gamma_- \alpha_-^{-1} Q=\\
&\qquad = \ga_+ - \gamma_- Q \al_+ - (\gamma_- \alpha_-^{-1}\beta_--\gamma_- Q\be_+)(\de-\gamma_- Q\be_+)^{-1}(\ga_+-\gamma_- Q\al_+)\\
&\qquad =(\de-\gamma_- Q\be_+ - \gamma_- \alpha_-^{-1}\beta_-+\gamma_- Q\be_+)(\de-\gamma_- Q\be_+)^{-1}(\ga_+-\gamma_- Q\al_+)\\
&\qquad =(\de - \gamma_- \alpha_-^{-1}\beta_-)(\de-\gamma_- Q\be_+)^{-1}(\ga_+-\gamma_- Q\al_+).
\end{align*}
Thus, since $\de - \gamma_- \alpha_-^{-1}\beta_-=R(0)$ is invertible, we see that
\begin{equation}\label{RiccToQuad}
(\de-\gamma_- Q\be_+)^{-1}(\ga_+-\gamma_- Q\al_+)=(\de - \gamma_- \alpha_-^{-1}\beta_-)^{-1}(\ga_+-\gamma_- \alpha_-^{-1} Q).
\end{equation}
From this identity it is immediately clear that $Q$ solves \eqref{RiccFKR2}, since it solves \eqref{RiccFKR}.

Conversely, assume that $Q$ solves \eqref{RiccFKR2}. Again, it suffices to show that \eqref{RiccToQuad} holds. In this case we have
\begin{align*}
&\gamma_- \alpha_-^{-1}Q =\\ 
&\qquad = \gamma_- \alpha_-^{-1} \alpha_- Q \al_+ + \gamma_- \alpha_-^{-1}(\beta_--\alpha_- Q\be_+) (\de-\gamma_-\alpha_-^{-1}\beta_-)^{-1}(\ga_+-\gamma_-\alpha_-^{-1} Q)\\
&\qquad =\gamma_- Q \al_+ + (\gamma_- \alpha_-^{-1}\beta_- - \gamma_- Q \be_+) (\de-\gamma_-\alpha_-^{-1}\beta_-)^{-1}(\ga_+-\gamma_-\alpha_-^{-1} Q),
\end{align*}
so that
\begin{align*}
&\ga_+ - \gamma_- Q \al_+=\\  
&\qquad = \ga_+ - \gamma_- \alpha_-^{-1}Q + (\gamma_- \alpha_-^{-1}\beta_- - \gamma_- Q \be_+) (\de-\gamma_-\alpha_-^{-1}\beta_-)^{-1}(\ga_+-\gamma_-\alpha_-^{-1} Q)\\
&\qquad = (\de-\gamma_-\alpha_-^{-1}\beta_- + \gamma_- \alpha_-^{-1}\beta_- - \gamma_- Q \be_+) (\de-\gamma_-\alpha_-^{-1}\beta_-)^{-1}(\ga_+-\gamma_-\alpha_-^{-1} Q)\\
&\qquad = (\de- \gamma_- Q \be_+) (\de-\gamma_-\alpha_-^{-1}\beta_-)^{-1}(\ga_+-\gamma_-\alpha_-^{-1} Q),
\end{align*}
again leading to \eqref{RiccToQuad}, since $\de- \gamma_- Q \be_+$ is invertible.
\end{proof}

\begin{corollary}\label{C:al+circform}
Let $R(z)$ be given by the stable representation \eqref{FKRreal} with $\alpha_-$ and $R(0)$ invertible. If $Q\in\cB(\cX_+,\cX_-)$ solves \eqref{RiccFKR}, then $\al_+^\circ$ in \eqref{FKRalcirc} is also given by 
\begin{equation}\label{Acirc2-FKR}
\al_+^\circ =\al_+-\be_+(\de-\gamma_-\alpha_-^{-1}\beta_-)^{-1}(\ga_+-\gamma_-\alpha_-^{-1} Q).
\end{equation}
In this case, the (unique) stabilizing solutions in $\tu{Ricc}_1$ and $\tu{Ricc}_2'$ coincide. 
\end{corollary}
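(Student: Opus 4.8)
The plan is to read off \eqref{Acirc2-FKR} directly from the identity \eqref{RiccToQuad}, which was already derived inside the proof of Proposition \ref{P:SolSetsRels}, and then to obtain the coincidence of stabilizing solutions by matching, one at a time, the two operators governing the stabilizing condition for \eqref{RiccFKR2} with $\al_+^\circ$ and $\alpha_-^\circ$ from \eqref{FKRalcirc}.

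For the first assertion, observe that the present hypotheses, namely that $Q$ solves \eqref{RiccFKR} and that $R(0)=\de-\gamma_-\alpha_-^{-1}\beta_-$ is invertible, are exactly those under which \eqref{RiccToQuad} was established. Multiplying \eqref{RiccToQuad} on the left by $\be_+$ and substituting the result into the defining formula \eqref{FKRalcirc} for $\al_+^\circ$ immediately produces \eqref{Acirc2-FKR}. This step is a pure substitution and carries no difficulty.

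For the second assertion, I would first invoke Proposition \ref{P:SolSetsRels}: since $R(0)$ is invertible, $\tu{Ricc}_1=\tu{Ricc}_2'$ as sets, so it remains only to verify that the two notions of ``stabilizing'' agree on this common solution set. On the ``plus'' side this is precisely the content of the first assertion: the first operator in \eqref{RiccFKR2stableOps} is literally the right-hand side of \eqref{Acirc2-FKR}, hence equals $\al_+^\circ$ of \eqref{FKRalcirc}, so it is exponentially stable if and only if $\al_+^\circ$ is. On the ``minus'' side I would apply the inversion formula \eqref{alphacirc-inv} of Lemma \ref{L:R(0)inv}, which (using that $R(0)$ is invertible, so that $\alpha_-^\circ$ is invertible for our $Q$) identifies the second operator in \eqref{RiccFKR2stableOps} with $(\alpha_-^\circ)^{-1}$; since $\sigma((\alpha_-^\circ)^{-1})\subset\BC\backslash\ov{\BD}$ holds exactly when $\sigma(\alpha_-^\circ)\subset\BD$, anti-stability of this operator is equivalent to exponential stability of $\alpha_-^\circ$. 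Combining the two sides, a solution $Q\in\tu{Ricc}_1=\tu{Ricc}_2'$ is stabilizing for \eqref{RiccFKR2} if and only if both $\al_+^\circ$ and $\alpha_-^\circ$ are exponentially stable, i.e.\ if and only if $Q$ is stabilizing for \eqref{RiccFKR}. As each equation admits at most one stabilizing solution by Theorem \ref{thm:FKR10}, the unique stabilizing solutions in $\tu{Ricc}_1$ and $\tu{Ricc}_2'$ coincide.

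The argument involves no serious obstacle, since the one substantive identity \eqref{RiccToQuad} is already in hand. The only point requiring care is the spectral bookkeeping on the ``minus'' side, namely correctly reading the anti-stability requirement in \eqref{RiccFKR2stableOps} through the inversion \eqref{alphacirc-inv} to recover exponential stability of $\alpha_-^\circ$; this hinges only on the elementary spectral mapping $\sigma((\alpha_-^\circ)^{-1})=\{1/\lambda:\lambda\in\sigma(\alpha_-^\circ)\}$.
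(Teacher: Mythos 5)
Your proof is correct and follows essentially the same route as the paper's: both read \eqref{Acirc2-FKR} off the identity \eqref{RiccToQuad} from the proof of Proposition \ref{P:SolSetsRels}, identify the second operator in \eqref{RiccFKR2stableOps} with $(\alpha_-^\circ)^{-1}$ via \eqref{alphacirc-inv}, and conclude with the uniqueness statement of Theorem \ref{thm:FKR10}. Your explicit spectral-mapping remark on the ``minus'' side merely spells out what the paper leaves implicit.
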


\begin{proof}[\bf Proof]
It follows from the proof of Proposition \ref{P:SolSetsRels} that \eqref{RiccToQuad} holds whenever $Q\in \tu{Ricc}_1\cup\tu{Ricc}_2'$. From \eqref{RiccToQuad}, it in turn follows immediately that $\al_+^\circ$ in \eqref{FKRalcirc} equals \eqref{Acirc2-FKR}, the first operator in \eqref{RiccFKR2stableOps}. 
%
%
By \eqref{alphacirc-inv}, the second operator in \eqref{RiccFKR2stableOps} is equal to $(\alpha_-^{\circ})^{-1}$. 
This shows that the stabilizing solutions in $\tu{Ricc}_1$ and $\tu{Ricc}_2'$ coincide, and we already know from Theorem \ref{thm:FKR10} that any stabilizing solutions in $\tu{Ricc}_1$ are unique. 
\end{proof}

Note that the uniqueness of the stabilizing solution for both Riccati equations is well-known; see, e.g., \cite{LaRoBook}.

\section{Left versus right canonical Wiener-Hopf factorization}\label{S:LeftRight}

In this section our starting point is a dichotomous realization \eqref{FKRtoDichot}. We shall also assume that $R(0)=\delta-\gamma_-\alpha_-^{-1}\beta_-$ is invertible. Consider
\begin{align*}
A^\times &= \begin{bmatrix}
\alpha_-^{-1} & 0 \\ 0 & \alpha_+
\end{bmatrix} -
\begin{bmatrix} \alpha_-^{-1} \beta_- \\ \beta_+\end{bmatrix} (\delta -\gamma_-\alpha_-^{-1}\beta_-)^{-1} 
\begin{bmatrix} -\gamma_-\alpha_-^{-1} & \gamma_+\end{bmatrix} \\
&=
\begin{bmatrix} 
\alpha_-^{-1} +\alpha_-^{-1} \beta_- R(0)^{-1} \gamma_-\alpha_-^{-1} & - \alpha_-^{-1} \beta_- R(0)^{-1} \gamma_+ \\
\beta_+R(0)^{-1}\gamma_-\alpha_-^{-1} & \alpha_+ -\beta_+R(0)^{-1}\gamma_+
\end{bmatrix}.
\end{align*}
We assume that $R(z)$ has a right canonical Wiener-Hopf factorization, so that there is a unique stabilizing solution $Q$ to \eqref{RiccFKR} by Theorem \ref{thm:FKR10}. The same $Q$ is the unique stabilizing solution to \eqref{RiccDichot2} by Corollary \ref{C:al+circform}. Then $Q$ is also the angular operator for the space $\cX_+^\times$ in the sense that the spectral subspace of $A^\times$ corresponding to $\BD$ is equal to ${\rm Im\,} \sbm{Q\\I}$; indeed, by Corollary \ref{C:al+circform} and \eqref{RiccDichot2}, we have
\[
A^\times \begin{bmatrix} Q \\ I \end{bmatrix} = \begin{bmatrix} Q \\ I \end{bmatrix} 
\big(\alpha_+-\beta_+R(0)^{-1} (\gamma_+-\gamma_-\alpha_-^{-1} Q)\big) =
\begin{bmatrix} Q \\ I \end{bmatrix}\alpha_+^\circ .
\]
Further, by \eqref{alphacirc-inv} we have
\[
A^\times \begin{bmatrix} I & Q \\  0 & I \end{bmatrix}  =
\begin{bmatrix} I & Q \\  0 & I \end{bmatrix}
\begin{bmatrix} (\alpha_-^\circ)^{-1} & 0 \\
\beta_+R(0)^{-1}\gamma_-\alpha_-^{-1} & \alpha_+^\circ\end{bmatrix}.
\]
To find the spectral subspace $\cX_-^\times$ of $A^\times$ corresponding to $\BC\backslash\ov{\BD}$ we block-diagonalize $A^\times$ as follows. Let $Z$ be the solution of the Lyapunov equation
\begin{equation}\label{LyapLR}
Z (\alpha_-^\circ)^{-1} -\alpha_+^\circ Z=-\beta_+R(0)^{-1}\gamma_-\alpha_-^{-1} ,
\end{equation}
which exists and is unique since $\si((\al_-^\circ)^{-1})\cap \si(\al_+^\circ)=\emptyset$; see, e.g., Theorem I.4.1 in \cite{GGKbook90}.
Then 
\[
\begin{bmatrix} I & 0 \\ Z & I \end{bmatrix} \begin{bmatrix} I & -Q \\ 0 & I \end{bmatrix} A^\times 
\begin{bmatrix}I & Q \\ 0 & I \end{bmatrix} \begin{bmatrix} I & 0 \\ -Z & I \end{bmatrix}  =
\begin{bmatrix} (\alpha_-^\circ)^{-1} & 0 \\
0 & \alpha_+^\circ\end{bmatrix} .
\]
So, the spectral subspace $\cX_-^\times$ of $A^\times$ corresponding to $\BC\backslash \ov{\BD}$ is given by 
\[
\cX_-^\times ={\rm Im\,} \begin{bmatrix} I-QZ \\ -Z\end{bmatrix}.
\]

This can now be applied to determine when there will be a left canonical Wiener-Hopf factorization of $R(z)$. Indeed, this will be the case when $\cX_+ \dot+ \cX_-^\times=\cX$, see, e.g., Theorem 6.1 in \cite{BGKR08}. Since $\cX_+={\rm Im\, } \begin{bmatrix} 0 \\ I \end{bmatrix}$ we see that this is the case precisely when $I-QZ$ is invertible, and in that case
\[
\cX_-^\times ={\rm Im\,} \begin{bmatrix} I \\ -Z(I-QZ)^{-1}\end{bmatrix}.
\]
We arrive at the following result:

\begin{theorem}\label{T:leftvsright}
Let $R(z)$ be given by \eqref{FKRreal}, with $R(z)$ analytic and invertible at zero, and assume that $R(z)$ admits right canonical Wiener-Hopf factorization. Let $Q$ be the stabilizing solution to \eqref{RiccFKR}. Also let $Z$ be the unique solution of \eqref{LyapLR}. Then $R(z)$ admits left canonical Wiener-Hopf factorization if and only if $I-QZ$ is invertible.
\end{theorem}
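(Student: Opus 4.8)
The plan is to observe that the geometric content of the statement has essentially already been assembled in the computations preceding it, so that the proof reduces to invoking the matching-subspace criterion for left canonical factorization and reading off exactly when the two relevant spectral subspaces are complementary.

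First I would recall the criterion from Subsection \ref{SubS:DichotReal} (see also Theorem 6.1 in \cite{BGKR08}): once $A$ and $A^\times$ are both dichotomous, $R(z)$ admits a left canonical Wiener-Hopf factorization if and only if the matching decomposition $\cX = \cX_+ \dotplus \cX_-^\times$ holds. Here $A$ is dichotomous because it arises from the dichotomous realization \eqref{FKRtoDichot}, and $A^\times$ is dichotomous by the block-diagonalization carried out above: the assumed right canonical factorization furnishes, via Theorem \ref{thm:FKR10} and Corollary \ref{C:al+circform}, a stabilizing $Q$ under which $A^\times$ is similar to $\diag\big((\alpha_-^\circ)^{-1},\alpha_+^\circ\big)$, whose blocks are anti-stable and stable respectively. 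This simultaneously identifies the spectral subspaces $\cX_+^\times = {\rm Im\,}\sbm{Q\\I}$ and $\cX_-^\times = {\rm Im\,}\sbm{I-QZ\\-Z}$ already computed via the Lyapunov equation \eqref{LyapLR}.

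Next I would substitute $\cX_+ = {\rm Im\,}\sbm{0\\I}$ and $\cX_-^\times = {\rm Im\,}\sbm{I-QZ\\-Z}$ into the matching condition. The sum $\cX_-^\times + \cX_+$ is the range of the block operator $\sbm{I-QZ & 0\\ -Z & I}$ on $\cX = \cX_- \dotplus \cX_+$, and this sum equals $\cX$ and is direct precisely when that operator is invertible. Being block lower-triangular with diagonal entries $I-QZ$ and $I$, it is invertible if and only if $I-QZ$ is invertible, which is exactly the asserted criterion; when it holds, right-multiplying the spanning operator of $\cX_-^\times$ by $(I-QZ)^{-1}$ recovers the stated form $\cX_-^\times = {\rm Im\,}\sbm{I\\ -Z(I-QZ)^{-1}}$.

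The only genuinely delicate point is the bookkeeping around directness: one must use that the criterion demands a direct sum equal to $\cX$ (not merely a spanning condition), and that the block-triangular structure is taken with respect to the original decomposition $\cX = \cX_- \dotplus \cX_+$, so that invertibility of the full block operator collapses to the single condition on $I-QZ$. Everything else is a direct assembly of the angular-operator identification of $Q$ and the Lyapunov block-diagonalization established in the paragraphs above.
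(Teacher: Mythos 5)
Your proposal is correct and follows essentially the same route as the paper: identify $\cX_+^\times={\rm Im\,}\sbm{Q\\I}$ via the stabilizing solution, block-diagonalize $A^\times$ with the Lyapunov solution $Z$ to get $\cX_-^\times={\rm Im\,}\sbm{I-QZ\\-Z}$, and apply the matching-subspace criterion (Theorem 6.1 in \cite{BGKR08}) to reduce everything to invertibility of $I-QZ$. Your explicit block lower-triangular argument for the final equivalence (including the injectivity/surjectivity bookkeeping) simply fills in the step the paper states tersely as ``we see that this is the case precisely when $I-QZ$ is invertible.''
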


Compare Theorem \ref{T:leftvsright} to \cite{BallRan} and Theorem 12.6 in \cite{BGKR10}, where the same question was studied, but from a different starting point.

\subsection*{Funding}

This work is based on research supported in part by the National Research Foundation of South Africa (Grant Numbers 127364 and 145688) and the DSI-NRF Centre of Excellence in Mathematical and Statistical Sciences (CoE-MaSS). Any opinion, finding and conclusion or recommendation expressed in this material is that of the authors and the NRF and CoE-MaSS do not accept any liability in this regard. Part of the research was conducted during a visit of the second author to North-West University in March and April of 2024 supported by a scholarship of the Magnus Ehrnrooth Foundation.


\begin{thebibliography}{xx}



\bibitem{BCR88}
J.A. Ball, N. Cohen, and A.C.M. Ran, Inverse spectral problems for regular improper rational matrix functions, In: {\em  Topics in interpolation theory of rational matrix-valued functions}, pp.\ 123--173, Oper.\ Theory Adv.\ Appl.\ {\bf 33}, Birkh\"{a}user Verlag, Basel, 1988.

\bibitem{BallRan} 
J.A. Ball and A.C.M. Ran, Left versus right canonical Wiener-Hopf factorization, In:\ {\em Constructive methods of Wiener-Hopf factorization}, pp.\ 9--38, Oper.\ Theory Adv.\ Appl.\ {\bf 21}, Birkh\:{a}user Verlag, Basel, 1986

\bibitem{BR07}
J.A. Ball and M.A. Raney, Discrete-time dichotomous well-posed linear systems and generalized Schur-Nevanlinna-Pick interpolation, {\em Complex Anal.\ Oper.\ Theory} {\bf 1} (2007), 1--54.



\bibitem{BGKR08}
H. Bart, I. Gohberg, M.A. Kaashoek, and A.C.M. Ran, \emph{Factorization of matrix and operator functions:\ the state space method}, Oper.\ Theory Adv.\ Appl.\ {\bf 178},  Birkh\"auser Verlag, Basel, 2008.

\bibitem{BGKR10}
H. Bart, I. Gohberg, M.A. Kaashoek, and A.C.M. Ran, \emph{A state space approach to canonical factorization with applications}, Oper.\ Theory Adv.\ Appl.\ {\bf 200}, Birkh\"auser, Basel, 2010.

\bibitem{BT94}
H. Bart and V.\`{E}. Tsekanovski\u{\i}, Complementary Schur complements, {\em Linear Algebra Appl.} {\bf 197/198} (1994), 651--658.

%

\bibitem{BGtH18c}
J.A. Ball, G.J. Groenewald, and S. ter Horst, Standard versus strict bounded real lemma with infinite-dimensional state space III:\ The dichotomous and bicausal cases, In:\ \emph{Operator theory, analysis and the state space approach},  Oper.\ Theory Adv.\ Appl.\ {\bf 271}, pp.\ 23--73, Birkh\"auser/Springer, Cham, 2018.

\bibitem{Coh83}
Nir Cohen, On minimal factorizations of rational matrix functions, \emph{Integr.\ Equ.\ Oper.\ Theory} \textbf{6} (1983), 647--671.

\bibitem{Conway90}
John~B. Conway, \emph{A course in functional analysis}, second ed., Graduate Texts in Mathematics, vol.~96, Springer-Verlag, New York, 1990.

\bibitem{FKR10}
A.E. Frazho, M.A. Kaashoek, and A.C.M. Ran, The non-symmetric discrete algebraic Riccati equation and canonical factorization of rational matrix functions on the unit circle, \emph{Integr.\ Equ.\ Oper.\ Theory} {\bf 66} (2010), 215--229.


\bibitem{GF74}
I.C. Gohberg, I.A. Feldman, \emph{Convolution equations and projection methods
for their solution. }Transl. Math. Monogr., Vol. {\bf 41}, Amer. Math. Soc., Providence,
R.I., 1974.

\bibitem{GGKbook90}
I. Gohberg, S. Goldberg, and M.A. Kaashoek, \emph{Classes of linear operators.\ Vol.\ I}, Oper.\ Theory Adv.\ Appl.\ {\bf 49}, Birkh\"{a}user Verlag, Basel, 1990.

\bibitem{GK88}
I. Gohberg, M.A. Kaashoek, Block Toeplitz operators with rational symbols.
In: \emph{Contributions to Operator Theory and its Applications.}
Oper.\ Theory Adv.\ Appl.\ {\bf 35},
Birkh\"{a}user Verlag, Basel, 1988, pp. 385–440.

\bibitem{GKS}
I. Gohberg, M.A. Kaashoek, I.M. Spitkovsky, An overview of matrix factorization
theory and operator applications. In: \emph{Factorization and integrable
systems.} Oper.\ Theory Adv.\ Appl.\ {\bf 141}, Birkh\"{a}user Verlag, Basel, 2003, pp. 1–102.

\bibitem{GoKr}
Gohberg, I. C., Krein, M. G., Systems of integral equations on a half line
with kernels depending on the difference of arguments, 
\emph{Uspehi Mat. Nauk} 13, no.
2 (80) (1958), 3–72 [Russian]; English translation:
\emph{Amer.Math. Soc. Transl.} (2) 14 (1960), 217–287.

\bibitem{GL09}
I. Gohberg and J. Leiterer, {\em Holomorphic operator functions of one variable and applications, Methods from complex analysis in several variables}, Oper.\ Theory Adv.\ Appl.\ {\bf 192}, Birkh\"{a}user Verlag, Basel, 2009.


\bibitem{Gr}
G.J. Groenewald, \emph{Wiener-Hopf factorization of rational matrix functions in
terms of realizations, an alternative version.} Ph. D. Thesis, Vrije Universiteit,
Amsterdam, 1993.

\bibitem{tHKR25}
S. ter Horst, M. Kurula, and A.C.M. Ran, A new systems theory perspective on canonical Wiener-Hopf factorization on the unit circle, Submitted, 2025.

\bibitem{LRR86}
P. Lancaster, A.C.M. Ran, and L. Rodman, Hermitian solutions of the discrete algebraic Riccati equation, \emph{Int.\ J. Control} {\bf 44} (1986), 777--802.

\bibitem{LaRoBook}
P. Lancaster and L. Rodman, \emph{Algebraic Riccati equations}, Oxford Science Publications, The Clarendon Press, Oxford University Press, New York, 1995.


%
%
%
%

\end{thebibliography}

\end{document}